\numberwithin{equation}{section}
\theoremstyle{plain}
\newtheorem{thm}{\protect\theoremname}[section]
\theoremstyle{plain}
\newtheorem{lem}[thm]{\protect\lemmaname}
\theoremstyle{definition}
\newtheorem{defn}[thm]{\protect\definitionname}
\theoremstyle{remark}
\newtheorem{rem}[thm]{\protect\remarkname}
\theoremstyle{plain}
\newtheorem{prop}[thm]{\protect\propositionname}
\theoremstyle{definition}
\newtheorem{example}[thm]{\protect\examplename}
\theoremstyle{plain}
\newtheorem{cor}[thm]{\protect\corollaryname}
\setlist[enumerate,1]{label=(\roman*), ref=(\roman*)}
\DeclareMathOperator{\E}{{\mathds E}}
\DeclareMathOperator*{\essinf}{ess\,inf}
\newcommand{\one}{{\mathds 1}} 		
\providecommand{\corollaryname}{Corollary}
\providecommand{\definitionname}{Definition}
\providecommand{\examplename}{Example}
\providecommand{\lemmaname}{Lemma}
\providecommand{\propositionname}{Proposition}
\providecommand{\remarkname}{Remark}
\providecommand{\theoremname}{Theorem}
\begin{document}
\title{\textbf{Foundations of}\\
\textbf{Multistage Stochastic Programming}}
\author{Paul Dommel\thanks{University of Technology, Chemnitz, Faculty of mathematics. 90126
Chemnitz, Germany\protect \\
DFG, German Research Foundation – Project-ID 416228727 – SFB~1410}  \and  Alois Pichler\footnotemark[1]\, \thanks{\protect\includegraphics[width=0.8em]{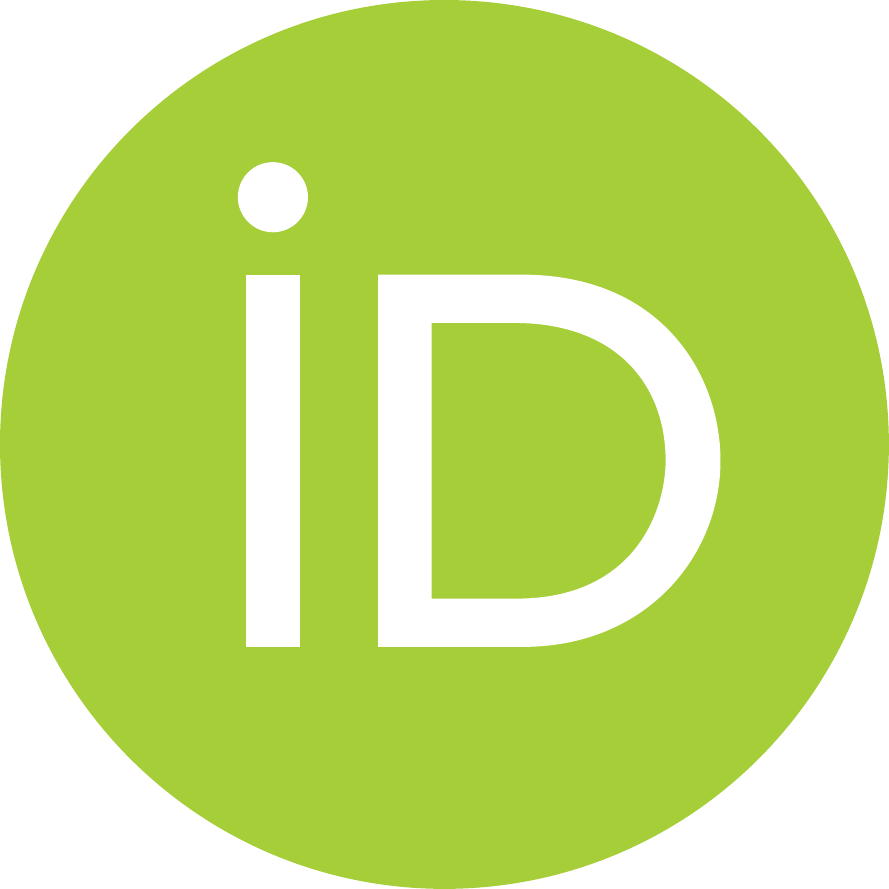}
\protect\href{https://orcid.org/0000-0001-8876-2429}{orcid.org/0000-0001-8876-2429}.
Contact: \protect\href{mailto:alois.pichler@math.tu-chemnitz.de}{alois.pichler@math.tu-chemnitz.de}}}
\maketitle
\begin{abstract}
Multistage stochastic optimization problems are oftentimes formulated
informally in a pathwise way. These are correct in a discrete setting
and suitable when addressing computational challenges, for example.
But the pathwise problem statement does not allow an analysis with
mathematical rigor and is therefore not appropriate.

This paper addresses the foundations. We provide a novel formulation
of multistage stochastic optimization problems by involving adequate
stochastic processes as control. The fundamental contribution is a
proof that there exist measurable versions of intermediate value functions.
Our proof builds on the Kolmogorov continuity theorem. 

A verification theorem is given in addition, and it is demonstrated
that all traditional problem specifications can be stated in the novel
setting with mathematical rigor. Further, we provide dynamic equations
for the general problem, which is developed for various problem classes.
The problem classes covered here include Markov decision processes,
reinforcement learning and stochastic dual dynamic programming. 

\medskip{}

\noindent \textbf{Keywords:} Multistage stochastic optimization ·
stochastic processes · measurability

\noindent \textbf{Classification:} 11M35, 11M99, 11Y35 
\end{abstract}

\section{Introduction }

Stochastic optimization problems are frequently considered in finance,
energy management and operations research where it is essential and
of primary interest to develop efficient algorithms and to provide
access to fast decisions. Many of these algorithms build on finite
models in discrete space. Multistage stochastic problems are built
on stochastic processes in discrete time or on decision trees, cf.\ \citet{MaggioniPflug},
\citet{PhilpottMatosFinardi} or \citet{Leclere2014} among many others.

This paper aims at presenting a rigorous mathematical framework for
stochastic optimization problems, particularly multistage stochastic
optimization problems, by systematically exploiting measurability
in stochastic processes, in conditional expectations and by involving
the proper conditional infimum. We develop value processes and show
their relation to the genuine stochastic optimization problem. Our
central result finally resolves measurability of the intermediate
value functions, it builds on the Kolmogorov continuity theorem.

\medskip{}

Multistage stochastic optimization involves optimization based on
partial realizations, which are partially observed trajectories. It
is a major difficulty of multistage stochastic optimization that individual
realizations or trajectories have probability zero. But the problems
are stated naturally in this pathwise way. It is hence essential to
avoid difficulties with arise with this pathwise, or $\omega$\nobreakdash-by\nobreakdash-$\omega$
considerations and to address measurability carefully. 

Early and important attempts to capture measurability are already
present in \citet{rockafellar1976integral} and in \citet{WetsRockafellar97}.
The conditional expectation, the conditional probability and the conditional
infimum constitute main and major difficulties in multistage stochastic
optimization. The infimum in the optimization formulation and the
conditional expectations need to be interchanged at subsequent stages
to exploit computational advantages, cf.\ \citet{Carpentier2015}
or \citet{PflugPichlerBuch}. Indeed, a recourse decision is based
on a partial realization of a stochastic outcome, but has to be considered
already at the very beginning of decision making. Considering every
outcome separately, $\omega$\nobreakdash-by\nobreakdash-$\omega$,
is only possible for finite states, so that a tree describes the evolution
of the stochastic process and the evolution of the decision process
as well. In a multistage environment, however, the computational burden
grows exponentially with the branching structure and this approach
thus is clearly not advisable. The catch phrase \emph{curse of dimensionality}
can be associated with this phenomenon in multistage stochastic optimization.

This paper addresses the general problem of measurability for discrete
and continuous probability measures. The central result is a proof
that there exists a measurable version of the intermediate value process.
We present dynamic equations even for the general, non-Markovian setting.
The general verification theorems presented  are characterizations
as martingales.

We elaborate the theory in full generality and elaborate on problem
settings, which are of particular importance in applications and increasingly
popular in stochastic optimization. They include dynamic programming
(the references \citet{Bertsekas2012} and \citet{Feinberg1996} include
considerations on measurability as well), stochastic dual dynamic
programming, the Bellman principle and reinforcement learning, which
has grown to outstanding importance in machine learning or data science.
For a recent tutorial including also computational aspects we refer
to \citet{Shapiro2020}. 

Investigations on foundations have been started in \citet{PichlerShapiro}
with a focus on the\emph{ distributionally robust} aspect of multistage
stochastic optimization. This paper enhances, complements and continuous
these investigations on foundations, but now addressing the genuine
problem statement itself. 

An important spacial case of multistage stochastic optimization, as
it is presented in this paper, is dynamic optimization. Dedicated
algorithms have been developed for this special case and papers as
\citet{Lan2019} address convergence of dynamic stochastic approximation,
e.g\@., \citet{Carpentier} collect recent theoretical results for
the special case of dynamic optimization again.

Applications of multistage stochastic optimization are widespread
over many economic and managerial disciplines. We pick \citet{Wozabal2013}
to represent and demonstrate importance of multistage stochastic optimization
for example in energy and \citet{Shapiro2013} to exemplify computational
limitations and \citet{Ruszczynski2010} to point to extensions involving
risk.

\paragraph{Outline.}

We address the general multistage problem formulation in Section~\ref{sec:Multistage},
after introducing the informal description and the mathematical setting.
An essential component to manage the evolution of the underlying stochastic
process and the decisions is the value process, introduced in Section~\ref{sec:ValueProcess}.
Particular situations as dynamic problems, additive cost functions,
Markovian processes and SDDP (stochastic dual dynamic programming)
appear frequently in applications. Important simplifications, dedicated
complexity and convergence issues are essential to solve these problems.
We address these particular problem formulations in Section~\ref{sec:Additive}. 

\section{Mathematical setting }

Stochastic optimization builds on random variables, while multistage
stochastic optimization builds on stochastic processes on adequate
probability spaces. In what follows we address the informal, pathwise
setting and then prepare the mathematical stage to discuss the optimization
problem with mathematical rigor. 

\subsection{Informal description}

The multistage optimization problem, stated informally as a work instruction,
is 
\begin{equation}
\inf_{u_{0}}\E_{X_{1}}\dots\E_{X_{t}}\underbrace{\inf_{u_{t}}\underbrace{\E_{X_{t+1}}\inf_{u_{t+1}}\dots\E_{X_{T}}\inf_{u_{T}}v(X_{1},\dots,X_{T},u_{0},\dots u_{T})}_{v_{t}(x_{1:t},u_{0:t})}}_{V_{t}(x_{1:t},u_{0:t-1})}.\label{eq:Symbolic}
\end{equation}
Here, $v$ is the random objective of the optimization problem, $(X_{1},\dots,X_{T})$
are the consecutive random observations and $u_{1},\dots,u_{T}$ the
decisions made after each partial realization $X_{t}$ at each stage~$t$.
The functions $v_{t}$ and $V_{t}$ are the intermediate value functions,
which are given intuitively in~\eqref{eq:Symbolic} in an $\omega$\nobreakdash-by\nobreakdash-$\omega$
or pathwise context. 

The problem statement~\eqref{eq:Symbolic} exhibits the following
difficulties:
\begin{enumerate}[noitemsep,nolistsep]
\item \label{enu:i}The expectation at stage $t$ is a conditional expectation,
conditional on the preceding observations $X_{1},\dots,X_{t}$. This
trajectory has probability~$0$ and the conditional expectation must
not be considered in a pathwise specification  as~\eqref{eq:Symbolic}
does.
\item \label{enu:ii}The infimum with respect to $u_{t}$ at stage $t$
depends on preceding observations. As above, this is a conditional
infimum and not measurable. 
\item \label{enu:iii}The intermediate value functions $v_{t}$ and $V_{t}$
aggregate the entire future. As functions, defined on observed partial
realizations, they are not necessarily measurable.
\end{enumerate}
Nonetheless, the work instruction~\eqref{eq:Symbolic} provides a
straightforward illustration of the optimization problem, indicating
the progression of successive optimization and random realizations.
While~\ref{enu:i} and~\ref{enu:ii} are fixed with standard means,
interchanging the infimum with expectations requires clarification.
The issue~\ref{enu:iii} emerges specifically in multistage optimization.
We resolve this problem with the help of Kolmogorov's continuity theorem.

In what follows we provide a rigorous mathematical problem statement
of~\eqref{eq:Symbolic} first and then discuss derived variants.

\subsection{Mathematical exposition}

Let $(\Omega,\mathcal{F},P)$ be a probability space. We may refer
to \citet[Lemma~1.13]{Kallenberg2002Foundations} or \citet[Theorem~II.4.3]{Shiryaev1996}
for the following Doob–Dynkin lemma.
\begin{lem}[Doob–Dynkin]
\label{rem:DoobDynkin-1}Suppose the random variable $U$ with values
in $\mathbb{R}^{t}$ is measurable with respect to the $\sigma$\nobreakdash-algebra
$\sigma(X)$ generated by the random variable $X$ with values in
$\mathbb{R}^{d}$. Then there is a (Borel\nobreakdash-) measurable
function $\varphi\colon\mathbb{R}^{d}\to\mathbb{R}^{t}$ so that 
\[
U=\varphi\circ X.
\]
\end{lem}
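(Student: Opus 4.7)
The plan is to follow the standard measure-theoretic ladder: first reduce to the scalar case, then handle indicators, extend linearly to simple functions, approximate monotonically for the nonnegative case, and finally decompose for signed $U$. The only nontrivial point is controlling $\varphi$ at points of $\mathbb{R}^{d}$ that lie outside the range of $X$.

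First I would reduce to $t=1$: writing $U=\bigl(U^{(1)},\dots,U^{(t)}\bigr)$, each coordinate $U^{(i)}$ is $\sigma(X)$\nobreakdash-measurable, and if Borel $\varphi_{i}\colon\mathbb{R}^{d}\to\mathbb{R}$ satisfy $U^{(i)}=\varphi_{i}\circ X$, then $\varphi:=(\varphi_{1},\dots,\varphi_{t})$ does the job. Next, for a scalar indicator $U=\one_{A}$ with $A\in\sigma(X)$, the very definition of the generated $\sigma$\nobreakdash-algebra provides a Borel $B\subseteq\mathbb{R}^{d}$ with $A=X^{-1}(B)$, so $\varphi:=\one_{B}$ works. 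By linearity this extends to any simple $\sigma(X)$\nobreakdash-measurable $U=\sum_{i=1}^{n}a_{i}\one_{A_{i}}$ via $\varphi:=\sum_{i=1}^{n}a_{i}\one_{B_{i}}$, where $A_{i}=X^{-1}(B_{i})$.

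For a general nonnegative $\sigma(X)$\nobreakdash-measurable $U$, I would choose simple $\sigma(X)$\nobreakdash-measurable functions $U_{n}\uparrow U$ (say, the usual dyadic truncations) and pick Borel $\varphi_{n}$ with $U_{n}=\varphi_{n}\circ X$ from the previous step. The main obstacle I expect is that $\bigl(\varphi_{n}(x)\bigr)_{n}$ need not converge for $x$ outside the range of $X$, and that range is in general not a Borel set one can simply discard. The remedy is to set
\[ \varphi(x):=\limsup_{n\to\infty}\varphi_{n}(x),\qquad x\in\mathbb{R}^{d}, \]
which is Borel on all of $\mathbb{R}^{d}$ since a pointwise limsup of Borel functions is Borel. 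For every $\omega\in\Omega$ the evaluations $\varphi_{n}\bigl(X(\omega)\bigr)=U_{n}(\omega)$ converge monotonically to $U(\omega)$, hence $\varphi\circ X=U$. A signed $U$ is finally handled by the decomposition $U=U^{+}-U^{-}$, applying the nonnegative case to each part and taking the difference $\varphi^{+}-\varphi^{-}$, which is again Borel measurable and realises $U$.
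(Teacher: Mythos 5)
The paper does not prove this lemma at all --- it is quoted with references to Kallenberg (Lemma~1.13) and Shiryaev (Theorem~II.4.3) --- so there is no in-paper argument to compare against. Your proof is the standard one found in those references and it is correct: the coordinatewise reduction, the observation that $\sigma(X)=\{X^{-1}(B)\colon B\in\mathcal{B}(\mathbb{R}^{d})\}$ handles indicators, the linear extension to simple functions, and in particular the $\limsup$ device, which is exactly the right way to deal with points outside $\range(X)$ (a set that is in general analytic but not Borel, so it genuinely cannot be discarded). The only cosmetic gap is that $\limsup_{n}\varphi_{n}(x)$ may equal $+\infty$ at points $x$ off the range of $X$, so $\varphi$ as written is a priori only $[0,\infty]$\nobreakdash-valued; replace it by $\varphi\cdot\one_{\{\varphi<\infty\}}$ (still Borel, and unchanged on $\range(X)$ since there the monotone limit is $U(\omega)\in\mathbb{R}$) before forming the difference $\varphi^{+}-\varphi^{-}$, which would otherwise risk an undefined $\infty-\infty$.
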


The essential infimum of a set of random variables is defined in \citet{DunfordSchwartz}.
We want to highlight \citet[Appendix~A.5]{Follmer2004} for the most
compelling proof regarding existence.
\begin{defn}[Essential infimum]
Let $\mathcal{U}$ be a family of $\mathbb{R}$\nobreakdash-valued
random variables. The random variable $Y$ is the \emph{essential
infimum} of $\mathcal{U}$, if
\begin{enumerate}[noitemsep]
\item \label{enu:EI1}$Y\le U$ a.e.\ for all $U\in\mathcal{U}$ and 
\item \label{enu:EI2}$Z\le Y$ a.e., whenever $Z\le U$ for all $U\in\mathcal{U}$. 
\end{enumerate}
We shall write $\essinf_{U\in\mathcal{U}}U\coloneqq Y$ for the essential
infimum of $\mathcal{U}$. 

\end{defn}

\begin{rem}
\label{rem:EssInf}The essential infimum exists and is unique, cf.\ \citet[Appendix~A.5]{Follmer2004}
or \citet[Appendix~A]{Karatzas}. If $\mathcal{U}$ is closed under
pairwise minimization,\footnote{The set $\mathcal{U}$ is said to be \emph{directed downwards} in
\citet{Follmer2004}.} i.e, $\min\left(U,\,V\right)\in\mathcal{U}$ for $U$, $V\in\mathcal{U}$,
then there is a nonincreasing sequence $U_{n}\in\mathcal{U}$ such
that $U_{n}\to\essinf_{U\in\mathcal{U}}U$ a.s., as $n\to\infty$.
\end{rem}

For $X$ measurable, the random variable $\essinf_{U\in\mathcal{U}}\bigl(\left.U\right|\sigma(X)\bigr)$
is measurable with respect to $\sigma(X)$, the sigma algebra generated
by $X$. By the Doob–Dynkin lemma there is a measurable $\varphi(\cdot)$
so that $\essinf_{U\in\mathcal{U}}\bigl(\left.U\right|\sigma(X)\bigr)=\varphi(X)$.
We shall denote this function by $\essinf_{U\in\mathcal{U}}\left(\left.U\right|X\right)\coloneqq\varphi$.
\begin{rem}
We shall also address the conditional essential infimum for a singleton
$\mathcal{U}=\left\{ U\right\} $. In this case, the random variable
$\essinf_{U\in\mathcal{U}}\left(\left.U\right|X\right)$ is the $\sigma(X)$\nobreakdash-measurable
envelope of $U$ for which we shall write $\essinf\left(\left.U\right|X\right)$. 
\end{rem}

\begin{rem}[Caveat]
The term essential infimum is occasionally also used for the largest
\emph{number} $c\in\mathbb{R}$ smaller than the random variable~$X$,
$c\le X$ a.s\@. This is $\essinf_{U\in\mathcal{U}}\left(\left.U\right|\left\{ \emptyset,\Omega\right\} \right)$
in the notation introduced, where $\left\{ \emptyset,\Omega\right\} $
is the trivial sigma algebra. 
\end{rem}

\subsection{Functional optimization}

The prevailing perspective in practice of multistage stochastic optimization
is not a measure theoretic perspective but rather a functional view:
we shall develop and address this perspective as the \emph{informal},
\emph{$\omega$\nobreakdash-by\nobreakdash-$\omega$} or \emph{pathwise
description}. Throughout, we will give the stochastic process perspective
first and then complement the informal perspective as well. While
the first one provides expressions with mathematical rigor, the latter,
intuitive problem statement is perhaps better to understand, well-established
and more practical for concrete numerical implementations. This is
essential for both, the governing stochastic process and the decision
process. 
\begin{defn}[Decomposable functions]
\label{def:Decomposable}Let $\sigma(\mathcal{U})\coloneqq\sigma(u\colon u\in\mathcal{U})$
be the sigma algebra generated by the functions $u\colon\mathbb{R}^{t}\to\mathbb{R}^{d}$
contained in~$\mathcal{U}$. We shall say that the class of functions~$\mathcal{U}$
is \emph{decomposable}, if $u_{A}\in\mathcal{U}$, where 
\[
u_{A}(x)\coloneqq\begin{cases}
u_{1}(x) & \text{if }x\in A,\\
u_{2}(x) & \text{else}
\end{cases}
\]
whenever $A\in\sigma(\mathcal{U})$ and $u_{1}$, $u_{2}\in\mathcal{U}$. 
\end{defn}

Traditional formulations of the interchangeability principle require
that the infimum is measurable (cf.\ \citet{ShapiroInterchangeability}
or the normal integrands in \citet[Theorem~14.60]{WetsRockafellar97}
or \citet{Rockafellar1982}). By involving the essential infimum,
the following proposition establishes the interchangeability principle
without requesting measurability explicitly.
\begin{prop}[Interchangeability principle]
\label{prop:Interchange}Let $\mathcal{U}$ be a class of measurable
functions, let $v\colon\mathbb{R}^{t}\times\mathbb{R}^{d}\to\mathbb{R}$
be a (measurable) function bounded from below and $X\colon\Omega\to\mathbb{R}^{t}$
a random variable. It holds that 
\begin{equation}
\E\essinf_{u\in\mathcal{U}}v\big(X,u(X)\big)\le\inf_{u\in\mathcal{U}}\E v\big(X,u(X)\big).\label{eq:inf}
\end{equation}
Equality holds in~\eqref{eq:inf} if $\mathcal{U}$ is decomposable
and $X$ is measurable with respect to $\sigma(\mathcal{U})$. 
\end{prop}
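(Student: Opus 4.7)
The plan is to establish the inequality~\eqref{eq:inf} by an elementary expectation argument and then, under decomposability of $\mathcal{U}$, upgrade it to equality by showing the family of objective random variables is directed downward, invoking Remark~\ref{rem:EssInf} to extract a minimizing sequence, and concluding via monotone convergence. For the inequality itself I would use property~\ref{enu:EI1} to obtain the pointwise bound $\essinf_{u' \in \mathcal{U}} v\bigl(X, u'(X)\bigr) \le v\bigl(X, u(X)\bigr)$ almost surely for every fixed $u \in \mathcal{U}$. Since $v$ is bounded from below, integrating preserves this bound, and taking the infimum over $u \in \mathcal{U}$ on the right yields~\eqref{eq:inf}.

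For the equality, under the hypotheses that $\mathcal{U}$ is decomposable and $X$ is $\sigma(\mathcal{U})$-measurable, the core step is to verify that the family $\mathcal{V} \coloneqq \bigl\{ v(X, u(X)) : u \in \mathcal{U}\bigr\}$ is closed under pairwise minimization. Given $u_1, u_2 \in \mathcal{U}$, I would set
\[
A \coloneqq \bigl\{x \in \mathbb{R}^t : v\bigl(x, u_1(x)\bigr) \le v\bigl(x, u_2(x)\bigr)\bigr\}
\]
and argue $A \in \sigma(\mathcal{U})$ from the measurability of $u_1, u_2$ and $v$. Decomposability then produces the patched function $u_A \in \mathcal{U}$, and because $X$ is $\sigma(\mathcal{U})$-measurable, the composition satisfies
\[
v\bigl(X, u_A(X)\bigr) = \min\bigl(v(X, u_1(X)),\, v(X, u_2(X))\bigr)
\]
almost surely. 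Remark~\ref{rem:EssInf} then supplies a decreasing sequence $(u_n) \subset \mathcal{U}$ with $v(X, u_n(X)) \searrow \essinf_{u \in \mathcal{U}} v(X, u(X))$ almost surely. Monotone convergence, applied after subtracting the lower bound of $v$ and reducing to the nontrivial case where $\E v(X, u_0(X)) < \infty$ for some $u_0 \in \mathcal{U}$, gives $\E v(X, u_n(X)) \to \E\essinf_{u\in\mathcal{U}} v(X, u(X))$. Since $\inf_u \E v(X, u(X)) \le \E v(X, u_n(X))$ for every $n$, the limit delivers the converse of~\eqref{eq:inf}.

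I expect the main obstacle to be the verification that $A$ lies in $\sigma(\mathcal{U})$ and that $u_A$ realizes the pointwise minimum after composition with $X$. This is precisely where the two extra hypotheses interlock: decomposability guarantees $u_A \in \mathcal{U}$, while $\sigma(\mathcal{U})$-measurability of $X$ ensures that $X^{-1}(A)$ coincides with the event $\{v(X, u_1(X)) \le v(X, u_2(X))\}$ on which the branching must occur. Once this measurability bookkeeping is settled, the remainder reduces to the directed-downward machinery from Remark~\ref{rem:EssInf} combined with a standard monotone convergence step.
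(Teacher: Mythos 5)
Your proposal is correct and follows essentially the same route as the paper: the pointwise bound plus expectation for~\eqref{eq:inf}, and for equality the use of decomposability to patch controls so that pointwise minima of the objective values are realized within $\mathcal{U}$, followed by extraction of a nonincreasing sequence converging to the essential infimum and a monotone convergence step. The only (cosmetic) difference is that you establish directedness by pairwise patching on the set $A$ and then invoke Remark~\ref{rem:EssInf}, whereas the paper first takes a countable subfamily and patches $n$ functions at once on the sets $\tilde{A}_i$; your explicit remark on where the hypothesis that $X$ is $\sigma(\mathcal{U})$-measurable enters is, if anything, slightly more careful than the paper's treatment.
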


\begin{proof}
For every $x$ we have that $\inf_{u\in\mathcal{U}}v\big(x,u(x)\big)\le v\bigl(x,u(x)\bigr)$
and thus $\essinf_{u\in\mathcal{U}}v\big(X,u(X)\big)\le v\big(X,u(X)\big)$
a.e\@. Taking expectations first and then the infimum reveals~\eqref{eq:inf}.

For the remaining assertion recall from Remark~\ref{rem:EssInf}
(or \citet[Appendix~A]{Karatzas}) that there is a sequence $u_{j}$
so that $\min_{j=1,\dots,n}v\big(X,u_{j}(X)\big)\to\essinf_{u\in\mathcal{U}}v\big(X,u(X)\big)$
almost surely, as $n\to\infty$. Define 
\[
A_{i}\coloneqq\left\{ v\big(X,u_{i}(X)\big)=\min_{j=1,\dots,n}v\big(X,u_{j}(X)\big)\right\} ,\qquad\tilde{A}_{i}:=A_{i}\backslash\bigcup_{j<i}A_{j}
\]
and set $\tilde{u}_{n}\coloneqq\sum_{i=1}^{n}u_{i}\cdot\one_{\tilde{A}_{i}}$.
As $\mathcal{U}$ is decomposable we have that $\tilde{u}_{n}\in\mathcal{U}$
and $v\big(x,\tilde{u}_{n}(x)\big)=\min_{i=1,\dots,n}v\big(x,u_{i}(x)\big)$.
Employing Beppo Levi's monotone convergence theorem we conclude that
$\E v\big(X,\tilde{u}_{n}(X)\big)\to\E\essinf_{u\in\mathcal{U}}v\big(X,u(X)\big)$
as $n\to\infty$ and hence the assertion. 
\end{proof}
\begin{prop}
Suppose that $u\mapsto v(x,u)$ is monotone for every $x$, i.e.,
$v(x,u_{1})\le v(x,u_{2})$ whenever $u_{1}\le u_{2}$ in every component
and $\min(u_{1},u_{2})\in\mathcal{U}$ for $u_{1}$, $u_{2}\in\mathcal{U}$.
Then interchangeability~\eqref{eq:inf} holds with equality. 
\end{prop}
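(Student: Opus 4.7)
The plan is to reduce to the setup of Remark~\ref{rem:EssInf} (nonincreasing approximation of the essential infimum) and then apply monotone convergence, mirroring the structure of Proposition~\ref{prop:Interchange} but replacing decomposability with the monotonicity hypothesis.

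First I would verify that the family $\mathcal{V}\coloneqq\bigl\{v\bigl(X,u(X)\bigr):u\in\mathcal{U}\bigr\}$ is directed downwards, i.e., closed under pairwise minimization. Indeed, given $u_{1},u_{2}\in\mathcal{U}$, set $\tilde{u}\coloneqq\min(u_{1},u_{2})\in\mathcal{U}$. Since $v(x,\cdot)$ is monotone in every component of its second argument, one has pointwise
\[
v\bigl(x,\tilde{u}(x)\bigr)\le v\bigl(x,u_{i}(x)\bigr),\qquad i=1,2,
\]
and conversely, applied at the coordinate realizing the minimum, $v(x,\tilde{u}(x))=\min\bigl(v(x,u_{1}(x)),v(x,u_{2}(x))\bigr)$. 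Hence $\min\bigl(v(X,u_{1}(X)),v(X,u_{2}(X))\bigr)\in\mathcal{V}$.

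Second, by Remark~\ref{rem:EssInf} there is a nonincreasing sequence $v\bigl(X,u_{n}(X)\bigr)\in\mathcal{V}$ with $u_{n}\in\mathcal{U}$ such that
\[
v\bigl(X,u_{n}(X)\bigr)\searrow\essinf_{u\in\mathcal{U}}v\bigl(X,u(X)\bigr)\quad\text{a.s.}
\]
Since $v$ is bounded from below, the monotone convergence theorem (applied to the nonincreasing sequence shifted by its lower bound, or equivalently Beppo Levi in the form valid for monotone sequences bounded below) yields
\[
\E v\bigl(X,u_{n}(X)\bigr)\longrightarrow\E\essinf_{u\in\mathcal{U}}v\bigl(X,u(X)\bigr).
\]

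Finally, since $u_{n}\in\mathcal{U}$ we have $\inf_{u\in\mathcal{U}}\E v\bigl(X,u(X)\bigr)\le\E v\bigl(X,u_{n}(X)\bigr)$ for every $n$; letting $n\to\infty$ produces the reverse of the inequality~\eqref{eq:inf} from Proposition~\ref{prop:Interchange}, and equality follows. The only subtle point, and arguably the main step, is the directedness verification: the hypothesis $\min(u_{1},u_{2})\in\mathcal{U}$ together with the componentwise monotonicity of $v(x,\cdot)$ is exactly what transports the pairwise-minimum property from $\mathcal{U}$ into the image family $\mathcal{V}$, which is what Remark~\ref{rem:EssInf} requires.
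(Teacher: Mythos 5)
Your proof is correct and takes essentially the same route as the paper's: the paper likewise uses the hypothesis to replace the decomposable gluing from the proof of Proposition~\ref{prop:Interchange} by the pointwise minimum $\min_{j\le n}u_{j}\in\mathcal{U}$, obtains a nonincreasing sequence converging a.s.\ to the essential infimum, and concludes by monotone convergence. One small caveat (which the paper's own proof shares): the asserted identity $v\bigl(x,\min(u_{1},u_{2})(x)\bigr)=\min\bigl(v(x,u_{1}(x)),v(x,u_{2}(x))\bigr)$ can fail when the componentwise minimum mixes coordinates of $u_{1}$ and $u_{2}$, so only ``$\le$'' holds — but that inequality, combined with $v\bigl(X,\tilde{u}(X)\bigr)\ge\essinf_{u\in\mathcal{U}}v\bigl(X,u(X)\bigr)$ a.s., is all that the directedness and convergence argument actually requires.
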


\begin{proof}
By monotonicity of $v$ we have with $u\coloneqq\min_{i=1,\dots,n}u_{i}\in\mathcal{U}$
that 
\[
\min_{j=1,\dots,n}v\big(X,u_{j}(X)\big)=v\big(X,\min_{j=1,\dots,n}u_{j}(X)\big)=v\big(X,u(X)\big).
\]
The assertion follows along the proof of Proposition~\ref{prop:Interchange}.
\end{proof}

\section{General multistage optimization problems\label{sec:Multistage}}

The general multistage optimization problem involves a stochastic
process instead of a simple random variable. Let $X=(X_{1},\dots,X_{t})$
be a stochastic process with stages $t=1,\dots,T$ and, without loss
of generality, with marginals $X_{t}\in\mathbb{R}$. For convenience,
the stochastic process~$X$ is occasionally also augmented with a
deterministic starting value $X_{0}=x_{0}$ a.s.\ so that $X=(X_{0},X_{1},\dots,X_{T})$.
\begin{defn}[Nonanticipativity]
The stochastic process $U=(U_{0},\dots,U_{T})$ is \emph{adapted}
to~$X$, if~$U_{t}$ is measurable with respect to $\sigma(X_{0},\dots,X_{t})$
for every $t=0,\dots,T$. We shall write 
\[
U\lhd X,
\]
if $U$ is adapted to $X$.
\end{defn}

In stochastic optimization, the synonymous term \emph{nonanticipative}
is more common than adapted.
\begin{defn}[The natural filtration]
The stochastic process $X=(X_{0},\dots,X_{T})$ is adapted to the
\emph{natural filtration}, if $X_{t}(\omega)=X_{t}(\omega_{1},\dots,\omega_{t})$
(that is, $X_{t}(\omega)=\tilde{X}_{t}(\omega_{1},\dots,\omega_{t})$
for some random variable $\tilde{X}$ which we identify with $X_{t}$). 
\end{defn}

Multistage stochastic optimization considers classes $\mathcal{U}$
of stochastic control processes. To not run into difficulties regarding
a governing measure we assume that there is a control $U_{0}$ so
that 
\[
U\lhd U_{0}\text{ (nonanticipative) for all }U\in\mathcal{U}.
\]
A particular situation arises for the class $\mathcal{U}$ of stochastic
processes adapted to $X$, $\mathcal{U}\subset\left\{ U\colon U\lhd X\right\} $.
In this case one may chose $U_{0}=X$ as governing process. 

We consider the following, general multistage stochastic optimization
problem.
\begin{defn}[Multistage optimization problem]
Let 
\begin{align}
v\colon\mathbb{R}^{T+1}\times\mathbb{R}^{T+1} & \to\mathbb{R}\label{eq:Value}\\
(x,u) & \mapsto v(x,u)\nonumber 
\end{align}
be a measurable function. For a class $\mathcal{U}$ of feasible controls,
the general multistage stochastic optimization problem is 
\begin{equation}
\inf_{\substack{U\in\mathcal{U},\\
U\lhd X
}
}\E v(X,U),\label{eq:v0}
\end{equation}
where the infimum is among all feasible control policies $U\in\mathcal{U}$
adapted to $X$. The function~$v$ is the (stochastic) \emph{objective
function} and the set $\mathcal{U}$ is the set of \emph{admissible
controls}, \emph{decisions} or \emph{policies}. Note that the decision
space is $\mathbb{R}^{T+1}$ in~\eqref{eq:Value}, that is, at each
stage $t\in\left\{ 0,\dots,T\right\} $ a decision in $\mathbb{R}$
is made; this setting is chosen for convenience of presentation. 
\end{defn}

In what follows we shall assume that the infimum in~\eqref{eq:v0}
is finite. A somewhat stronger assumption, although not necessary,
is that $v$ is uniformly bounded from below (i.e., $v\ge C>-\infty$)
so that the expectation in~\eqref{eq:v0} is well-defined for every
$U\in\mathcal{U}$.

\subsection{Equivalent problem statements}

For $u_{t}(x_{1},\dots,x_{t})$ measurable it is evident that $u_{t}(X_{1},\dots,X_{t})$
is measurable with respect to $\sigma(X_{1},\dots,X_{t})$. For this,
\begin{equation}
U\coloneqq u(X_{1},\dots,X_{T})\label{eq:Function}
\end{equation}
 is a nonanticipative process with respect to $X$, provided that
\begin{equation}
u(x_{1},\dots,x_{T})=\begin{pmatrix}\begin{array}{l}
u_{0}\\
u_{1}(x_{1})\\
\qquad\vdots\\
u_{T}(x_{1},\dots,x_{T})
\end{array}\end{pmatrix}.\label{eq:u}
\end{equation}
The Doob–Dynkin lemma (Lemma~\ref{rem:DoobDynkin-1}) ensures that
every process $U\in\mathcal{U}$ adapted to~$X$ has the particular
form~\eqref{eq:Function} with~\eqref{eq:u}. 
\begin{lem}[Doob–Dynkin lemma, extended]
\label{lem:DoobDynkin}Let $X=(X_{0},\dots,X_{T})$ be a stochastic
process in discrete time with marginals states $X_{t}\in\mathbb{R}^{d}$
and $U\lhd X$. There are measurable functions $u_{t}$ so that $U_{t}=u_{t}(X_{1},\dots,X_{t})$
for $t=0,\dots,T$ a.s.\ and $U=\varphi_{U}\circ X$, where $\varphi_{U}=u$
is given by~\eqref{eq:u}. 
\end{lem}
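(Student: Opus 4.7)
My plan is to apply the original Doob--Dynkin lemma (Lemma~\ref{rem:DoobDynkin-1}) componentwise to each $U_t$, and then collect the resulting scalar (or $\mathbb{R}^d$\nobreakdash-valued) functions into the single function $\varphi_U$ of the form~\eqref{eq:u}.

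First, I would fix $t \in \{0,\dots,T\}$ and observe that by the nonanticipativity assumption $U \lhd X$, the random variable $U_t$ is measurable with respect to the sigma algebra $\sigma(X_0, X_1, \dots, X_t)$. Bundling the prefix into a single random variable $Y_t \coloneqq (X_0, X_1, \dots, X_t)$ with values in $\mathbb{R}^{d(t+1)}$, Lemma~\ref{rem:DoobDynkin-1} yields a Borel measurable function $\tilde{u}_t \colon \mathbb{R}^{d(t+1)} \to \mathbb{R}$ so that $U_t = \tilde{u}_t(Y_t) = \tilde{u}_t(X_0, X_1, \dots, X_t)$ almost surely.

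Since $X_0 = x_0$ is a deterministic starting value, I would define $u_t(x_1, \dots, x_t) \coloneqq \tilde{u}_t(x_0, x_1, \dots, x_t)$, which is measurable as the composition of $\tilde{u}_t$ with the measurable (affine) embedding $(x_1,\dots,x_t) \mapsto (x_0, x_1, \dots, x_t)$. Thus $U_t = u_t(X_1, \dots, X_t)$ a.s., and for $t=0$ the function $u_0$ is the constant given by the deterministic initial decision. This produces the functions appearing in~\eqref{eq:u}.

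Finally, I would assemble the collection $(u_0, u_1, \dots, u_T)$ into the single function $u = \varphi_U$ defined by~\eqref{eq:u}; measurability of $\varphi_U \colon \mathbb{R}^{d(T+1)} \to \mathbb{R}^{T+1}$ is immediate since each coordinate $u_t$ depends measurably on the coordinates $x_1,\dots,x_t$. The relation $U = \varphi_U \circ X$ then holds almost surely by combining the identities $U_t = u_t(X_1,\dots,X_t)$ over $t=0,\dots,T$. There is no real obstacle here; the mild bookkeeping points are handling the deterministic $X_0$ (absorbed into constants) and verifying that the ``vector'' function $\varphi_U$ is jointly measurable, which reduces to measurability of each component.
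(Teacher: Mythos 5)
Your proposal is correct and is exactly the argument the paper intends: the paper gives no separate proof for this lemma, treating it as an immediate componentwise application of Lemma~\ref{rem:DoobDynkin-1} to each $U_t$ (measurable w.r.t.\ $\sigma(X_0,\dots,X_t)$), with the deterministic $X_0$ absorbed into the constants and the resulting $u_t$ stacked into $\varphi_U$ as in~\eqref{eq:u}. Your bookkeeping on the measurability of the assembled vector function and on the $t=0$ case is fine and adds nothing beyond what the paper leaves implicit.
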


\paragraph{Functional optimization perspective.}

The optimization problem~\eqref{eq:v0} employs a fixed stochastic
process~$X$. In view of the Doob–Dynkin lemma, the  problem~\eqref{eq:v0}
thus can be stated as an optimization problem among stochastic processes,
or equivalently also as optimization problem among functions, each
of the specific form~\eqref{eq:u}. The multistage stochastic optimization
problem thus can be classified as a \emph{functional optimization
problem}, because solving it means finding unknown \emph{functions}
as~\eqref{eq:u}. The equivalence between measurable functions and
processes is given by 
\[
U\mapsto\varphi_{U},
\]
where $\varphi_{U}$ is the function from the extended Doob–Dynkin
lemma (Lemma~\ref{lem:DoobDynkin}), while the inverse is the map
\[
u\mapsto U=u(X)
\]
 given in~\eqref{eq:Function}.

Further, this equivalence allows extending the notion of decomposable
to stochastic processes.
\begin{defn}[Decomposable processes]
The class $\mathcal{U}$ of stochastic process is decomposable, if
each function in
\[
\left\{ \varphi_{U}\colon U\in\mathcal{U}\right\} 
\]
is decomposable in the sense of Definition~\ref{def:Decomposable}. 
\end{defn}

\subsection{Special cases of the general problem setting}

The conventional stochastic optimization problem and the stochastic
optimization problem with recourse are special cases of the multistage
stochastic optimization problem.
\begin{example}[$T=0$]
Consider the set of policies with $\mathcal{U}\subset\left\{ U\colon U_{t}\lhd X_{0}\text{ for all }t\ge0\right\} $
(or $T=0$), so that each component $u_{t}$ is deterministic, i.e.,
nonrandom. The corresponding optimization problem 
\begin{equation}
\inf_{u\in\mathbb{R}^{T+1}}\E v(X,u)\label{eq:Classical-1}
\end{equation}
is a conventional stochastic optimization problem, as it is sufficient
to treat $X$ as a random vector in~\eqref{eq:Classical-1}. Here,
it is not essential that $X$ is a stochastic process, the time component
is missing.
\end{example}

\begin{example}[$T=1$]
Consider the feasible policies 
\[
\mathcal{U}\subset\left\{ u\colon u_{0}\lhd X_{0}\text{ and }u_{t}\lhd X_{1}\text{ for all }t\ge1\right\} 
\]
(or $T=1$). With~\eqref{eq:u}, the problem simplifies to 
\begin{equation}
\inf_{(u_{0},u_{1}(X))\in\mathcal{U}}\E v\big(X_{1},u_{0},u_{1}(X_{1})\big).\label{eq:Classical}
\end{equation}
Here, the decision $u_{0}$ is deterministic, i.e., does not depend
on the random components of $X$; $u_{1}(\cdot)$ is called the \emph{random
recourse decision} in the literature (cf.\ \citet{RuszczynskiShapiro2009}).
\end{example}

\section{The value process\label{sec:ValueProcess}}

It is an important conceptual element in stochastic optimization to
consider the problem sequentially in time, so that any new observation
$X_{t}$ triggers a subsequent new decision $u_{t}(X_{1}\dots,X_{t})$,
which itself is based on the past. \citet{Shapiro2012} depicts the
consecutive transitions via the chain in Figure~\ref{fig:Progress}.
The transitions Figure~\ref{fig:Progress} can be started with $X_{0}$
equally well.

\begin{figure}[H]
\[
u_{0}\leadsto X_{1}\leadsto u_{1}\leadsto\dots\leadsto X_{t}\leadsto u_{t}\leadsto X_{t+1}\leadsto\dots\leadsto X_{T}\leadsto u_{T}
\]

\caption{The progression of random observations and decisions\label{fig:Progress}}
\end{figure}

In what follows we develop a similar decomposition of the optimization
problem~\eqref{eq:v0} and present our main result in Theorem~\ref{thm:21}
below. For notational convenience we introduce the abbreviation $x_{t:t^{\prime}}\coloneqq(x_{t},x_{t+1},\dots,x_{t^{\prime}})$
($0\le t$, $t^{\prime}\le T$) for subvectors. We also write $X_{:t}\coloneqq(X_{0},\dots,X_{t})$
for the initial and $U_{t:}\coloneqq(U_{t},\dots,U_{T})$ for the
final (trailing) substrings. Recall that $U$ is a non-anticipative
process if there is a control $u$ so that $U=u(X_{1,}\dots,X_{T})$,
as well as a functions $u_{t:}$ with $U_{t:}=u_{t:}(X)$. By $\mathcal{U}_{t:}=\left\{ u_{t:}\colon U\in\mathcal{U}\right\} $
we denote the set of functions including the final decisions of all
control processes.

\subsection{Existence of the intermediate value functions}

A common way to solve the initial problem~\eqref{eq:v0} is to decompose
it into a sequence of subproblems. We specify these subproblems by
introducing the value process in the following considerations. Let
$u_{:t}\in\mathbb{R}^{t+1}$ and a function $\tilde{u}_{t+1:T}\in\mathcal{U}_{t+1:T}$
be given. As a consequence of the Doob–Dynkin lemma (Lemma~\ref{rem:DoobDynkin-1})
there is measurable mapping $v_{t,u_{:t}}^{\tilde{u}_{t+1:T}}\colon\mathbb{R}^{t+1}\to\mathbb{R}$
such that 
\begin{equation}
v_{t,u_{:t}}^{\tilde{u}_{t+1:T}}(X_{:t})=\E\big(\left.v\left(X,u_{:t},\tilde{u}_{t+1:T}(X)\right)\right|X_{:t}\big).\label{eq:V4}
\end{equation}
These conditional expectations constitute the building block for the
intermediate value functions.
\begin{defn}
\label{def:IntermediateValue}The \emph{(intermediate) value functions}
are 
\begin{align}
v_{t}(x_{:t},u_{:t}) & \coloneqq\essinf_{\tilde{u}_{t+1:T}\in\mathcal{U}_{t+1:T}}v_{t,u_{:t}}^{\tilde{u}_{t+1:T}}\left(x_{:t}\right)\text{ and}\label{eq:v3}\\
V_{t}(x_{:t},u_{:t-1}) & \coloneqq\essinf_{\tilde{u}_{t}\in\mathcal{U}_{t}}v_{t}\left(x_{:t},u_{:t-1,}\tilde{u}_{t}(x_{:t})\right),\label{eq:V3}
\end{align}
where $t=0,\dots,T$.
\end{defn}

These value functions are functions on $\mathbb{R}^{(t+1)\times(t+1)}$
($\mathbb{R}^{(t+1)\times t}$, resp.)\ and the essential imfima
are with respect to these spaces. These functions are generally not
unique as there are multiple functions satisfying the Doob–Dynkin
lemma. The value functions~$V_{t}$ and~$v_{t}$ are defined pointwise
(and well-defined on each point), but they are not necessarily measureable.
Hence, additional conditions on~$v$ need to be imposed to ensure
measurability.

\medskip{}

The following statement is the main result. It establishes existence
of a measurable version of the intermediate value functions. The proof
builds on \emph{Kolmogorov's continuity theorem}, also known as Kolmogorov–Chentsov
theorem.
\begin{thm}[Existence of a measurable version of the value function]
\label{thm:21}Assume that $v$ is locally Hölder continuous with
exponent $\alpha>0$ in~$u$, i.e., 
\begin{equation}
\left|v(x,u_{1})-v(x,u_{2})\right|\le C\left\Vert u_{1}-u_{2}\right\Vert ^{\alpha}\qquad\text{for }x\in\mathbb{R}^{t}\ \text{ and }\ \left\Vert u_{1}-u_{2}\right\Vert \le\delta,\label{eq:Hoelder}
\end{equation}
where $\delta>0$ is sufficiently small. Then there exists a version
of~$v_{t}$ of the intermediate value function which is measurable
with respect to $\mathcal{B}(\mathbb{R}^{t+1})\otimes\mathcal{B}(\mathbb{R}^{t+1})$
and locally Hölder continuous with exponent $\tilde{\alpha}\in\left(0,\frac{\alpha}{t+2}\right)$.
\end{thm}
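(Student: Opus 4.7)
The plan is to realise the function $u_{:t} \mapsto v_t(X_{:t},u_{:t})$ as a stochastic process indexed by the control parameter in $\mathbb{R}^{t+1}$, to propagate the Hölder hypothesis~\eqref{eq:Hoelder} through both the conditional expectation~\eqref{eq:V4} and the essential infimum~\eqref{eq:v3}, and finally to invoke Kolmogorov's continuity theorem on the resulting process. The continuous modification it produces is automatically jointly measurable in $(\omega, u_{:t})$, whence the extended Doob--Dynkin lemma (Lemma~\ref{lem:DoobDynkin}) furnishes the sought measurable version $\tilde v_t \colon \mathbb{R}^{t+1}\times\mathbb{R}^{t+1}\to\mathbb{R}$.

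First I would establish the inheritance of the Hölder bound for the pre-essinf quantity $v_{t,u_{:t}}^{\tilde u_{t+1:T}}$. Because $v$ is Hölder in the control variable and changing $u_{:t}$ leaves the tail $\tilde u_{t+1:T}(X)$ untouched, estimate~\eqref{eq:Hoelder} yields $\bigl|v(X, u_{:t}^{(1)}, \tilde u(X)) - v(X, u_{:t}^{(2)}, \tilde u(X))\bigr| \le C\|u_{:t}^{(1)} - u_{:t}^{(2)}\|^{\alpha}$ almost surely, and this transfers to~\eqref{eq:V4} because conditional expectation is an $L^\infty$-contraction. As the constant $C$ is uniform in $\tilde u$, I would then pass to the essential infimum: the directedness property in Remark~\ref{rem:EssInf} supplies a countable sequence $(\tilde u^{(n)})$ with $\min_{n\le N} v_{t,u_{:t}}^{\tilde u^{(n)}}(X_{:t}) \to v_t(X_{:t},u_{:t})$ a.s., and since the pointwise Hölder estimate is preserved under finite minima and a.s.\ limits, a single null-set argument delivers
\[
\bigl|v_t(X_{:t},u_{:t}^{(1)}) - v_t(X_{:t},u_{:t}^{(2)})\bigr| \le C\|u_{:t}^{(1)} - u_{:t}^{(2)}\|^{\alpha} \quad \text{a.s.}
\]
for each pair with $\|u_{:t}^{(1)} - u_{:t}^{(2)}\| \le \delta$.

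With this pointwise a.s.\ Hölder bound in hand, the moment estimate $\E\bigl|v_t(X_{:t},u_{:t}^{(1)}) - v_t(X_{:t}, u_{:t}^{(2)})\bigr|^p \le C^p\|u_{:t}^{(1)} - u_{:t}^{(2)}\|^{p\alpha}$ holds for every $p \ge 1$. Choosing $p = (t+2)/\alpha$ turns the exponent on the right-hand side into $(t+1)+1$, so the Kolmogorov--Chentsov hypothesis in dimension $t+1$ is satisfied with surplus $\gamma = 1$; the theorem produces a modification $\tilde v_t(X_{:t},u_{:t})$ whose sample paths $u_{:t} \mapsto \tilde v_t(X_{:t}(\omega),u_{:t})$ are, almost surely in $\omega$, locally Hölder continuous with any exponent $\tilde\alpha < \gamma/p = \alpha/(t+2)$. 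A countable covering of $\mathbb{R}^{t+1}$ by balls of radius $\delta$ globalises the statement. Continuity in $u_{:t}$ combined with $\sigma(X_{:t})$-measurability in $\omega$ makes the modification jointly measurable, and Doob--Dynkin then transports this into joint measurability of $\tilde v_t$ on $\mathbb{R}^{t+1}\times\mathbb{R}^{t+1}$.

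The main obstacle I anticipate is the second step. The essential infimum in~\eqref{eq:v3} ranges over the potentially uncountable family $\mathcal{U}_{t+1:T}$, and each almost-sure Hölder statement for a fixed $\tilde u$ holds only up to a $\tilde u$-dependent null set; one cannot simply intersect uncountably many such null sets. The technical heart of the argument is therefore the reduction to a countable minimising sequence via pairwise minimisation (which itself uses the decomposability-type closure of $\mathcal{U}_{t+1:T}$ underlying Remark~\ref{rem:EssInf}), so that the Hölder estimate survives the passage to the essential infimum with a single exceptional null set depending only on the pair $(u_{:t}^{(1)}, u_{:t}^{(2)})$. The remaining family of null sets, now indexed by pairs in $\mathbb{R}^{t+1}\times\mathbb{R}^{t+1}$, is precisely what Kolmogorov's theorem is built to absorb.
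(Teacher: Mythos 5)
Your argument is correct and reaches the same exponent $\alpha/(t+2)$ with the same two main tools---inheritance of the H\"older bound under conditional expectation, and the Kolmogorov--Chentsov theorem with $p=(t+2)/\alpha$ in dimension $t+1$---but you order the steps differently from the paper. The paper applies Kolmogorov's theorem to the pre-infimum process $Z_{u}=\E\bigl(v(X,u,\tilde u_{t+1:T}(X))\mid X_{:t}\bigr)$ for each \emph{fixed} tail control $\tilde u_{t+1:T}$, obtaining continuity of $v_t^{\tilde u_{t+1:T}}$ in $u$, and then invokes the Carath\'eodory-type joint-measurability result (Theorem~\ref{thm:22}); the passage of the continuity through the essential infimum over $\mathcal{U}_{t+1:T}$ is left implicit there. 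You instead push the H\"older estimate through the essential infimum first and apply Kolmogorov directly to $u_{:t}\mapsto v_t(X_{:t},u_{:t})$; this addresses head-on exactly the step the paper glosses over, and is in that respect the more complete route to what the theorem actually asserts (H\"older continuity of $v_t$ itself, not of each $v_t^{\tilde u_{t+1:T}}$). One repair is needed in your ``technical heart'': the directedness in Remark~\ref{rem:EssInf} presupposes closure under pairwise minimization, which Theorem~\ref{thm:21} does not assume, so the countable minimizing sequence is not available for free. But you do not need it. For a fixed pair with $\varepsilon\coloneqq C\lVert u^{(1)}_{:t}-u^{(2)}_{:t}\rVert^{\alpha}$, the bound $v^{\tilde u}_{t,u^{(1)}_{:t}}\le v^{\tilde u}_{t,u^{(2)}_{:t}}+\varepsilon$ a.s.\ for every $\tilde u$ gives $v_t(X_{:t},u^{(1)}_{:t})-\varepsilon\le v^{\tilde u}_{t,u^{(2)}_{:t}}(X_{:t})$ a.s.\ for every $\tilde u$, whence property~\ref{enu:EI2} in the definition of the essential infimum yields $v_t(X_{:t},u^{(1)}_{:t})-\varepsilon\le v_t(X_{:t},u^{(2)}_{:t})$ a.s.; by symmetry the two-sided estimate follows with a single null set per pair, which is all you need before invoking Kolmogorov. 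With that substitution your proof is complete. (As an aside: since your increment bound is deterministic in $\omega$, restricting to a countable dense set of controls and extending by uniform continuity would even give exponent $\alpha$ rather than $\alpha/(t+2)$; Kolmogorov's theorem is not strictly necessary at that point, though using it keeps your conclusion aligned with the paper's statement.)
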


To prove the main theorem we recall the following condition on joint
measureability from \citet[Theorem~2]{Gowrisankaran1972}; we state
the result in full mathematical beauty, although we do not need this
most general variant.
\begin{thm}
\label{thm:22}Let $(X,\tau)$ be a measureable space and $Y$ a Suslin
space. Let $\mathcal{B}$ be the Borel $\sigma$\nobreakdash-algebra
of all measurable subsets for a locally finite measure $\lambda$
on the Borel $\sigma$-algebra of $Y$. Then, a function $f\colon X\times Y\to A$
with values in a separable metrizable space $A$ with
\begin{enumerate}[noitemsep,nolistsep]
\item \label{enu:1}$x\mapsto f(x,y)$ is $\tau$-measurable for every
$y\in Y$ and
\item \label{enu:2}$y\mapsto f(x,y)$ is continuous on $Y$ for each $x\in X$
\end{enumerate}
is $\tau\otimes\mathcal{B}$-measurable on $X\times Y$.

\end{thm}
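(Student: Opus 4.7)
The plan is to approximate $f$ pointwise by jointly measurable step functions in the $y$ variable and invoke the standard fact that pointwise limits of measurable maps into a separable metric space remain measurable. Separability of $Y$—which is immediate for Suslin spaces, as they are continuous images of Polish spaces—and continuity in $y$ are the two driving ingredients of the construction.

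As a first reduction, since $A$ is separable metric, its Borel $\sigma$-algebra is generated by open balls $B(a_k, 1/m)$ centered at a countable dense subset $\{a_k\}_{k \in \mathbb{N}} \subset A$. It therefore suffices to establish the claim for real-valued Carathéodory maps, for instance $g_k(x,y) := d_A\bigl(f(x,y), a_k\bigr)$, which inherit hypotheses~\ref{enu:1} and~\ref{enu:2} from $f$. Thus one may henceforth pretend $A = \mathbb{R}$.

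In the model case where $Y$ is separable metric, fix a countable dense sequence $\{y_n\} \subset Y$ and form the Borel partitions
\[
E_{n,m} := B(y_n, 1/m) \setminus \bigcup_{j<n} B(y_j, 1/m), \qquad m, n \in \mathbb{N},
\]
together with the step-function approximants
\[
f_m(x,y) := \sum_{n=1}^{\infty} f(x, y_n)\, \one_{E_{n,m}}(y).
\]
For every open $U \subset A$ one has $f_m^{-1}(U) = \bigcup_n \{x : f(x, y_n) \in U\} \times E_{n,m} \in \tau \otimes \mathcal{B}$ by hypothesis~\ref{enu:1}, so each $f_m$ is jointly measurable. For any fixed $(x,y)$, continuity~\ref{enu:2} yields $f_m(x,y) \to f(x,y)$ as $m \to \infty$, because the unique index $n(m)$ with $y \in E_{n(m),m}$ satisfies $d\bigl(y, y_{n(m)}\bigr) < 1/m$. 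Hence $f$ is a pointwise limit of jointly measurable maps and is itself jointly measurable.

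The genuine obstacle is upgrading from a metric $Y$ to an arbitrary Suslin $Y$, where the metric-ball partitioning above is unavailable. Here one exploits the defining property of Suslin spaces: fix a Polish space $P$ together with a continuous surjection $\pi\colon P \to Y$, apply the metric-case argument to $\tilde f(x,p) := f\bigl(x, \pi(p)\bigr)$ on $X \times P$ to obtain $\tau \otimes \mathcal{B}(P)$-measurability, and then transport the conclusion back to $X \times Y$ through analytic-set machinery. The hard point is that the push-forward under $\mathrm{id}_X \times \pi$ of a $\tau \otimes \mathcal{B}(P)$-measurable set is in general only analytic (Suslin-measurable) in $X \times Y$; one then uses that analytic subsets of a Suslin space are universally measurable with respect to every locally finite Borel measure. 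This is exactly where the hypothesis that $\mathcal{B}$ is the completion of the Borel $\sigma$-algebra under $\lambda$—and not merely the bare Borel $\sigma$-algebra—becomes essential. This capacitability step, resting on Choquet's theorem, is the technical heart of the argument and the genuine content of Gowrisankaran's contribution.
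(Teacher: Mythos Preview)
The paper does not prove this theorem at all. Theorem~\ref{thm:22} is quoted verbatim from \cite[Theorem~2]{Gowrisankaran1972} as a black box, introduced with the words ``we recall the following condition on joint measurability from \citet[Theorem~2]{Gowrisankaran1972}; we state the result in full mathematical beauty, although we do not need this most general variant.'' It is then applied inside the proof of Theorem~\ref{thm:21}, but no argument for Theorem~\ref{thm:22} itself is given or even sketched. There is therefore nothing in the paper to compare your proposal against.

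That said, your outline is a faithful reconstruction of the standard route and, in essence, of Gowrisankaran's original argument: reduce to a real-valued target via the countable family of distance functions, handle the separable-metric case by freezing $y$ on a countable dense set and forming step-function approximants, and then treat the general Suslin case by lifting along a continuous surjection $\pi\colon P\to Y$ from a Polish space and invoking capacitability to descend. You are right that the completion with respect to the locally finite measure~$\lambda$ is precisely what is needed to absorb the analytic (rather than Borel) sets produced by the push-forward; this is indeed the crux of Gowrisankaran's contribution. One small point worth tightening in a full write-up: the descent step is cleanest if, instead of pushing sets forward under $\mathrm{id}_X\times\pi$, you use a \emph{measurable section} $s\colon Y\to P$ of~$\pi$ (available by the von~Neumann/Jankov selection theorem, again universally measurable), so that $f(x,y)=\tilde f\bigl(x,s(y)\bigr)$ and joint measurability is inherited directly by composition.
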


\begin{rem}
Functions satisfying the conditions~\ref{enu:1} and~\ref{enu:2}
of Theorem~\ref{thm:22} are also known  as \emph{Carathéodory functions}. 
\end{rem}

\begin{proof}[Proof of Theorem~\ref{thm:21}]
We shall employ Theorem~\ref{thm:22}. Consider the function 
\[
v_{t}^{\tilde{u}_{t+1:T}}(x_{:t},u_{:t})\coloneqq v_{t,u_{:t}}^{\tilde{u}_{t+1:T}}(x_{:t})
\]
(cf.~\eqref{eq:V4}), where $\tilde{u}_{t+1:T}\in\mathcal{U}_{t+1:T}$
is fixed. Measurability follows from the definition of the function~$v_{t}$
in~\eqref{eq:v3} and general measurability of the essential infimum
and thus the condition~\ref{enu:1} of Theorem~\ref{thm:22}. 

It remains to verify continuity, i.e.,~\ref{enu:2}. In order to
employ Theorem~\ref{thm:22} we need to show continuity of $u_{:t}\mapsto v_{t}^{\tilde{u}_{t+1:T}}\left(x_{:t},u_{:t}\right)$.
To this end consider the stochastic process $\left(Z_{u}\right)_{u\in\mathbb{R}^{t+1}}$,
indexed by $u\in\mathbb{R}^{t+1}$ and defined by 
\[
Z_{u}\coloneqq\E\bigl(\left.v\left(X,u,\tilde{u}_{t+1.T}(X)\right)\right|X_{:t}\bigr).
\]
Further, let $u_{0}\in\mathbb{R}$ and $u\in U_{\delta}(u_{0})$ for
$\delta>0$ sufficiently small be given. Set $\tilde{\alpha}\coloneqq\frac{t+2}{\alpha}$
, $\beta\coloneqq1$ and by employing the Hölder condition~\eqref{eq:Hoelder}
we have that 
\begin{align*}
\E\left|Z_{u}-Z_{u_{0}}\right|^{\tilde{\alpha}} & =\E\big|\E\left(\left.v\left(X,u,\tilde{u}_{t+1.T}(X)\right)-v\left(X,u_{0},\tilde{u}_{t+1.T}(X)\right)\right|X_{:t}\right)\big|^{\tilde{\alpha}}\\
 & \le\E\left(C\cdot\left\Vert u-u_{0}\right\Vert ^{\alpha}\right)^{\tilde{\alpha}}\\
 & \le C\left\Vert u-u_{0}\right\Vert ^{t+2}=C\left\Vert u-u_{0}\right\Vert ^{t+1+\beta}
\end{align*}
for some $C<\infty$. Hence, by the Kolmogorov continuity theorem
(cf.\ \citet[p.~453]{Klenke}), there is a process $Z_{u}$ such
that $Z_{u}=Z(\cdot,u)=\E\left(v\left(X,u,\tilde{u}_{t+1:T}(X)\right)|X_{:t}\right)$
and $Z(\omega,\cdot)$ is Hölder continuous with exponent $\frac{\beta}{\tilde{\alpha}}=\frac{\alpha}{t+2}$
for almost every $\omega\in\Omega$. It follows that the corresponding
functions $v_{t}^{\tilde{u}_{t+1:T}}$ are continuous with respect
to $u$. This proves~\ref{enu:2} and hence the assertion of the
theorem.
\end{proof}
\begin{rem}[Lipschitz continuity]
It is evident that measurable versions of~\eqref{eq:v3} and~\eqref{eq:V3}
exist for uniformly Lipschitz continuous objective functions~$v$.
\end{rem}

\subsection{The value process}

In what follows, we define the value processes substituting $x_{:t}$
and $u_{:t}$ by their stochastic counterparts~$X_{:t}$ and $U_{:t}$.
\begin{defn}[Value process]
\label{def:Value-1-1} Assume $v$ satisfies the Hölder condition
imposed in Theorem~\ref{thm:21} and $U\in\mathcal{U}$ is a nonanticipative
stochastic process ($U\lhd X$). The \emph{general value processes}
are 
\begin{align}
\boldsymbol{v}_{t}^{U} & \coloneqq v_{t}(X_{:t},U_{:t})\text{ and}\label{eq:v-4}\\
\boldsymbol{V}_{t}^{U} & \coloneqq V_{t}(X_{:t},U_{:t-1}),\label{eq:V-4}
\end{align}
where $v_{t}$ and $V_{t}$ are the intermediate value functions,
cf.\ Definition~\ref{def:IntermediateValue}.
\end{defn}

\begin{rem}
The functions $v_{t}$ and $V_{t}$ (cf.~\eqref{eq:v3} and~\eqref{eq:V3})
are defined on
\[
V_{t}\colon\mathbb{R}^{t+1}\times\mathbb{R}^{t}\to\mathbb{R}\text{ and }v_{t}\colon\mathbb{R}^{t+1}\times\mathbb{R}^{t+1}\to\mathbb{R}.
\]
We employ bold letters to indicate random variables, i.e., functions
on $\Omega$ given by 
\begin{align*}
\boldsymbol{v}_{t}^{U}(\omega) & =v_{t}\bigl(X_{:t}(\omega),U_{:t}(\omega)\big)\text{ and}\\
\boldsymbol{V}_{t}^{U}(\omega) & =V_{t}\big(X_{:t}(\omega),U_{:t-1}(\omega)\big).
\end{align*}
Figure~\ref{fig:Diagram} depicts the domain and the range of these
functions and random variables. 
\end{rem}

\begin{figure}
\begin{centering}
\begin{tikzpicture}
\matrix (m)[matrix of math nodes, row sep= 3em, column sep= 5em]
  {\Omega              & & \mathbb R\\
& \mathbb R^{T+1}\times\mathbb R^{T+1} & \\
& \mathbb R^{t+1}\times\mathbb R^{t+1} & \\};
\path (m-1-1) edge [->] node [below left] {$(X_{:t},U_{:t})$} (m-3-2);
\path (m-1-1) edge [->] node [above] {$\boldsymbol v_t$} (m-1-3);
\path (m-1-1) edge [->] node [above] {$(X,U)$} (m-2-2);
\path (m-2-2) edge [->] node [right] {} (m-3-2);
\path (m-2-2) edge [->] node [above] {$v$} (m-1-3);
\path (m-3-2) edge [->] node [below right] {$v_t\le \E\big(\left.v(X,U)\right|X_{:t},U_{:t}\big)$} (m-1-3);
\end{tikzpicture}
\par\end{centering}
\caption{Domain and range of the objective and the general value process\label{fig:Diagram}}
\end{figure}

\begin{rem}[Pathwise, or $\omega$\nobreakdash-by\nobreakdash-$\omega$ description]
\label{def:Value}The functions $V_{t}$ and $v_{t}$ describing
the value processes~\eqref{eq:v-4} and~\eqref{eq:V-4} can be given
explicitly and directly—but intuitively—as 
\begin{equation}
V_{t}(x_{:t},u_{:t-1})=\inf_{u_{t:}(\cdot)}\E\left(\left.v\bigl(X_{:T},u_{:t-1},u_{t:T}(X_{:T})\bigr)\right|X_{:t}=x_{:t},U_{:t}=u_{:t-1}\right)\label{eq:v-3}
\end{equation}
and
\begin{equation}
v_{t}(x_{:t},u_{:t})=\inf_{u_{t+1:}(\cdot)}\E\left(\left.v\bigl(X_{:T},u_{:t},u_{t+1:T}(X_{:T})\bigr)\right|X_{:t}=x_{:t},U_{:t}=u_{:t}\right),\label{eq:V-3}
\end{equation}
where the infima are among functions 
\[
u_{t:}(x_{1},\dots,x_{T})=\begin{pmatrix}\begin{array}{l}
u_{t}(x_{1},\dots,x_{t})\\
\qquad\quad\vdots\\
u_{T}(x_{1},\dots,x_{t},\dots,x_{T})
\end{array}\end{pmatrix}
\]
with $u(X)\in\mathcal{U}$.

Note, however, that the expressions~\eqref{eq:v-3} and~\eqref{eq:V-3}
are not necessarily well defined, as they may depend explicitly on
the choice of the control process $U_{:t}$. They further face a delicate
measurability problem, as the pointwise infimum is not measurable,
in general. Hence~\eqref{eq:v-3} and~\eqref{eq:V-3} can\emph{not
}be used as definitions. Our definitions~\eqref{eq:v3} and~\eqref{eq:V3},
together with~\eqref{eq:v-4} and~\eqref{eq:V-4}, resolve this
problem by addressing $u_{:t}$ as a parameter and passing over to
the essential infimum, which has a measurable version by the main
theorem, Theorem~\ref{thm:22}. 
\end{rem}

\subsection{Relation to the multistage problem}

In what follows we derive the equations interconnecting the value
functions introduced in the preceding section. To this end observe
first that 
\begin{equation}
V_{0}=\inf_{U\in\mathcal{U}}\E v(X,U)\label{eq:V0inf}
\end{equation}
by definition~\eqref{eq:V3}, so that $V_{0}$ is the optimal value
of the initial problem. Further, we have with~\eqref{eq:v3} that
\begin{equation}
v_{T}=v,\label{eq:7-2}
\end{equation}
which is the starting point of the optimization problem at the final
stage.\medskip{}

The following statements interconnect the value functions at intermediate
stages.
\begin{thm}
\label{thm:8}Let $U\in\mathcal{U}$ be a feasible policy. It holds
that 
\begin{align}
V_{t}(X_{:t},U_{t-1}) & =\essinf_{\tilde{u}_{t}\in\mathcal{U}_{t}}v_{t}(X_{:t},U_{:t-1},\tilde{u}_{t}(X_{:t}))\text{ and}\label{eq:18}\\
v_{t}(X_{:t},U_{:t}) & \ge\E\left(\left.V_{t+1}\left(X_{:t+1},U_{:t}\right)\right|X_{:t}\right).\label{eq:17}
\end{align}
Equality holds in~\eqref{eq:17}, if $\mathcal{U}$ is decomposable.
\end{thm}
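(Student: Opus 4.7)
The plan is to treat~\eqref{eq:18} as essentially a restatement of Definition~\ref{def:IntermediateValue}, to derive the inequality in~\eqref{eq:17} by the tower property together with the definitions of $v_{t+1}$ and $V_{t+1}$, and to obtain the equality case via a conditional form of the interchangeability principle (Proposition~\ref{prop:Interchange}).

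For~\eqref{eq:18}, the defining identity~\eqref{eq:V3} already reads $V_t(x_{:t}, u_{:t-1}) = \essinf_{\tilde u_t \in \mathcal U_t} v_t(x_{:t}, u_{:t-1}, \tilde u_t(x_{:t}))$ pointwise in $x$. Upon substituting $x_{:t} = X_{:t}$ and $u_{:t-1} = U_{:t-1}$, the pointwise essinf over functions and the essinf of the random family $\{v_t(X_{:t}, U_{:t-1}, \tilde u_t(X_{:t}))\}$ coincide, since the pushforward of $P$ under $(X_{:t}, U_{:t-1})$ governs both.

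For the inequality in~\eqref{eq:17}, I would fix $\tilde u_{t+1:T} = (\tilde u_{t+1}, \tilde u_{t+2:T}) \in \mathcal U_{t+1:T}$ and a deterministic $u_{:t}$, and apply the tower property to write $v_{t, u_{:t}}^{\tilde u_{t+1:T}}(X_{:t}) = \E\bigl(\E(v(X, u_{:t}, \tilde u_{t+1:T}(X)) \mid X_{:t+1}) \mid X_{:t}\bigr)$. The inner conditional expectation equals $v_{t+1, (u_{:t}, \tilde u_{t+1}(X_{:t+1}))}^{\tilde u_{t+2:T}}(X_{:t+1})$, which by~\eqref{eq:v3} dominates $v_{t+1}(X_{:t+1}, u_{:t}, \tilde u_{t+1}(X_{:t+1}))$ a.s., and by~\eqref{eq:18} further dominates $V_{t+1}(X_{:t+1}, u_{:t})$ a.s. Monotonicity of conditional expectation then yields $v_{t, u_{:t}}^{\tilde u_{t+1:T}}(X_{:t}) \ge \E(V_{t+1}(X_{:t+1}, u_{:t}) \mid X_{:t})$ a.s.; taking essinf over $\tilde u_{t+1:T}$ on the left and substituting the $\sigma(X_{:t})$-measurable $U_{:t}$ for $u_{:t}$ gives~\eqref{eq:17}.

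For equality under decomposability, I would observe that decomposability of $\mathcal U$ descends to $\mathcal U_{t+1}$ and $\mathcal U_{t+2:T}$, and then apply Proposition~\ref{prop:Interchange} conditionally twice: once on the innermost essinf to obtain $\E(v_{t+1}(X_{:t+1}, u_{:t}, \tilde u_{t+1}(X_{:t+1})) \mid X_{:t}) = \essinf_{\tilde u_{t+2:T}} \E(v(X, u_{:t}, \tilde u_{t+1}(X_{:t+1}), \tilde u_{t+2:T}(X)) \mid X_{:t})$, and once on the outer essinf over $\tilde u_{t+1}$; combining yields $\E(V_{t+1}(X_{:t+1}, u_{:t}) \mid X_{:t}) = \essinf_{\tilde u_{t+1:T}} \E(v(X, u_{:t}, \tilde u_{t+1:T}(X)) \mid X_{:t}) = v_t(X_{:t}, u_{:t})$. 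The principal obstacle is this conditional interchangeability step, since Proposition~\ref{prop:Interchange} is stated only for unconditional expectation; one must verify that the descending approximating sequence from Remark~\ref{rem:EssInf} still lies in $\mathcal U_{t+1:T}$ after both swaps (i.e., that a single admissible control realizes the combined essinf), using conditional monotone convergence. A subsidiary measurability concern is that the argument uses the measurable versions of $v_t$ and $V_t$ from Theorem~\ref{thm:21}, so the Hölder hypothesis there is implicitly in force.
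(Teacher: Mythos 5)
Your proposal is correct and follows essentially the same route as the paper: \eqref{eq:18} is read off from Definition~\ref{def:IntermediateValue}, the inequality \eqref{eq:17} comes from the tower property combined with the easy direction of the interchangeability principle, and equality under decomposability comes from applying Proposition~\ref{prop:Interchange} to the conditional expectation. You are in fact slightly more careful than the paper, which collapses the iterated essential infima over $\tilde{u}_{t+1}$ and $\tilde{u}_{t+2:T}$ into a single one and invokes Proposition~\ref{prop:Interchange} in conditional form without comment; your explicit flagging of the conditional monotone-convergence step needed there is a legitimate refinement, not a different argument.
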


\begin{proof}
The first equation follows directly from the definition of $V_{t}$
and $v_{t}$. The second follows from 
\begin{align*}
\E\big(\left.V_{t+1}\left(X_{:t+1},U_{:t}\right)\right|X_{:t}\big) & =\E\left(\left.\essinf_{\text{\ensuremath{\tilde{u}_{t+1:T}}}\in\mathcal{U}_{t+1:T}}\E\left(\left.v(X,U_{:t},\tilde{u}_{t+1:T}(X))\right|X_{:t+1}\right)\right|X_{:t}\right)\\
 & \le\essinf_{\text{\ensuremath{\tilde{u}_{t+1:T}}}\in\mathcal{U}_{t+1:T}}\E\big(\left.\E\left(\left.v(X,U_{:t},\tilde{u}_{t+1:T}(X))\right|X_{:t+1}\right)\right|X_{:t}\big)\\
 & =\essinf_{\text{\ensuremath{\tilde{u}_{t+1:T}}}\in\mathcal{U}_{t+1:T}}\E\big(\left.v(X,U_{:t},\tilde{u}_{t+1:T}(X))\right|X_{:t}\big)
\end{align*}
by Proposition~\ref{prop:Interchange} and the tower property of
the conditional expectation. Equality holds, by Proposition~\ref{prop:Interchange}
again, for decomposable controls and hence the assertion. 
\end{proof}
\begin{rem}[Pathwise, or $\omega$\nobreakdash-by\nobreakdash-$\omega$ description]
As above and employing the functions~\eqref{eq:v-3} and~\eqref{eq:V-3},
the equations can be stated directly and explicitly by 
\begin{align*}
V_{t}(x_{:t},u_{:t-1}) & =\inf_{u_{t}}v_{t}(x_{:t},u_{:t-1},u_{t})\quad\text{ and}\\
v_{t}(x_{:t},u_{:t}) & \ge\E_{X_{t+1}}\left(\left.V_{t+1}(X_{:t+1},u_{:t})\right|X_{:t}=x_{:t}\right)\\
 & =\E_{X_{t+1}}\left(\left.V_{t+1}(x_{:t},X_{t+1},u_{:t})\right|X_{:t}=x_{:t}\right).
\end{align*}
Equality holds, if $\mathcal{U}$ is decomposable.

These equations get to the point directly and explain the computational
task at each stage ($t)$ and at each node ($x_{:t}$, $u_{:t-1}$).
Note again that stating the equations this way is not justified from
a mathematical perspective, the equations suffer from measurability
issues, in general. They are justified in the finite dimensional case
if $P(X_{:t}=x_{:t}\text{ and }U_{:t-1}=u_{t-1})>0$. 
\end{rem}

The mutual relations above give rise to combining the components to
the following dynamic equations.
\begin{cor}[Dynamic relations]
\label{cor:Dynamic}Let $U\in\mathcal{U}$ be a feasible control
process. It holds that 
\begin{align*}
\boldsymbol{V}_{t}^{U} & \ge\essinf_{U_{:t}^{\prime}\in\mathcal{U}_{:t},\,U_{:t-1}^{\prime}=U_{:t-1}}\E\left(\left.\boldsymbol{V}_{t+1}^{U^{\prime}}\right|X_{:t}\right)\quad\text{ and}\\
\boldsymbol{v}_{t}^{U} & \ge\E\left(\left.\essinf_{U_{:t+1}^{\prime}\in\mathcal{U}_{:t+1},\,U_{:t}^{\prime}=U_{:t}}\boldsymbol{v}_{t+1}^{U^{\prime}}\right|X_{:t}\right).
\end{align*}
Equality holds, if $\mathcal{U}$ is decomposable.
\end{cor}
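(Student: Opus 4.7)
The plan is to derive both dynamic relations by simply chaining the two equations provided by Theorem~\ref{thm:8}, reading the resulting essential infima over $\tilde{u}_t \in \mathcal{U}_t$ as essential infima over the control processes $U'$ constrained to agree with $U$ on the relevant prefix. Throughout, the extended Doob--Dynkin lemma (Lemma~\ref{lem:DoobDynkin}) provides the bijection between admissible processes $U'$ and the functions $u_t'$ that parametrize them, so that the two modes of indexing the essential infimum coincide.

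For the first relation, start from the identity $\boldsymbol{V}_{t}^{U} = V_{t}(X_{:t},U_{:t-1})$ and apply~\eqref{eq:18}, which rewrites this as $\essinf_{\tilde{u}_{t}\in\mathcal{U}_{t}} v_{t}\bigl(X_{:t},U_{:t-1},\tilde{u}_{t}(X_{:t})\bigr)$. Each term inside the essential infimum is, by~\eqref{eq:17} applied with the adapted process $\bigl(U_{:t-1},\tilde{u}_t(X_{:t})\bigr)$ in place of $U_{:t}$, bounded below by $\E\bigl(V_{t+1}(X_{:t+1},U_{:t-1},\tilde{u}_t(X_{:t}))\,\big|\,X_{:t}\bigr)$. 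Taking the essential infimum over $\tilde{u}_t$ and identifying the process $U'_{:t}=(U_{:t-1},\tilde{u}_t(X_{:t}))$ with a generic element of $\{U'\in\mathcal{U}_{:t}:U'_{:t-1}=U_{:t-1}\}$ delivers the first inequality. The second relation is proved in the mirror fashion: starting from $\boldsymbol{v}_{t}^{U}=v_{t}(X_{:t},U_{:t})$, inequality~\eqref{eq:17} yields $\boldsymbol{v}_{t}^{U}\ge\E\bigl(V_{t+1}(X_{:t+1},U_{:t})\,\big|\,X_{:t}\bigr)$, and then substituting~\eqref{eq:18} at stage $t+1$ expresses the conditional integrand as $\essinf_{\tilde{u}_{t+1}\in\mathcal{U}_{t+1}} v_{t+1}\bigl(X_{:t+1},U_{:t},\tilde{u}_{t+1}(X_{:t+1})\bigr)$, which is exactly $\essinf_{U'_{:t+1}\in\mathcal{U}_{:t+1},\,U'_{:t}=U_{:t}}\boldsymbol{v}_{t+1}^{U'}$.

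Equality under decomposability is inherited directly: Theorem~\ref{thm:8} itself gives equality in~\eqref{eq:17} when $\mathcal{U}$ is decomposable (via the interchangeability principle, Proposition~\ref{prop:Interchange}), and~\eqref{eq:18} is an equality unconditionally. The main subtlety I anticipate is bookkeeping the identification between the essential infima indexed by $\tilde{u}_t$ (or $\tilde{u}_{t+1}$) and those indexed by controls $U'$ satisfying the prefix constraint; this is legitimate because decomposability is preserved after fixing a prefix and because, by the extended Doob--Dynkin lemma, the maps $U'\mapsto \varphi_{U'}$ are bijections that make the two families of random variables coincide pointwise. Measurability of the objects inside the essential infima is guaranteed by Theorem~\ref{thm:21}, so the essinf expressions are well-defined.
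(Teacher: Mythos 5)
Your proposal is correct and follows essentially the same route as the paper: the paper's own proof simply states that the corollary is immediate from the defining equations~\eqref{eq:v-4}--\eqref{eq:V-4} together with Theorem~\ref{thm:8}, and your argument is exactly that combination spelled out, including the identification of the essential infimum over $\tilde{u}_t\in\mathcal{U}_t$ with the essential infimum over processes $U'$ agreeing with $U$ on the prefix. The only cosmetic remark is that your appeal to Theorem~\ref{thm:21} and to a ``bijection'' from the Doob--Dynkin lemma is more scaffolding than the argument needs; the paper treats this indexing identification as notational.
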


\begin{proof}
The assertion is immediate by combining the defining equations~\eqref{eq:v-4}
and~\eqref{eq:V-4} and the assertions of Theorem~\ref{thm:8}.
\end{proof}
\begin{rem}[Dynamic relations, pathwise description]
\label{rem:19}It holds that 
\begin{align}
V_{t}(x_{:t},u_{:t-1}) & \ge\inf_{u_{t}}\E_{X_{t+1}}\left(\left.V_{t+1}(x_{:t},X_{t+1},u_{:t})\right|X_{:t}=x_{:t},U_{:t-1}=u_{:t-1}\right)\quad\text{ and}\label{eq:11}\\
v_{t}(x_{:t},u_{:t}) & \ge\E_{X_{t+1}}\left(\left.\inf_{u_{t+1}}v_{t+1}(x_{:t},X_{t+1},u_{:t+1})\right|X_{:t}=x_{:t},U_{:t}=u_{:t}\right).\nonumber 
\end{align}
Equality holds, if $\mathcal{U}$ is decomposable.
\end{rem}

\subsection{Verification theorems}

Verification theorems provide optimality conditions. Given these characterizations
it is the purpose of verification theorems to allow verifying or checking,
if a given policy is optimal or not. An interesting, early reference
is \citet{Rockafellar1976}, who study martingales associated with
optimality conditions. \citet{Fleming1993} give verification theorems
for dynamic (in particular Markovian) problems in continuous time.
We shall address this particular situation further in more detail
below. 

The value process $\boldsymbol{v}_{t}^{U}$ is a stochastic process
depending on an underlying policy $U$. A special situation occurs
if the underlying policy $U$ is optimal, i.e., $U$ solves the initial
problem~\eqref{eq:v0}. In what follows we examine this situation.
We further provide a useful characterization of the optimizers of~\eqref{eq:v0},
relating the different concepts regarding optimization and probability
theory.

\begin{thm}[Verification theorem]
\label{thm:Verification}Let $u\in\mathcal{U}$ be any policy. Then
the stochastic processes
\[
\boldsymbol{v}_{t}^{U}=v_{t}\big(X_{:t},u_{:t}(X_{:t})\big),\qquad t=0,\dots,T,
\]
and 
\[
\boldsymbol{V}_{t}^{U}=V_{t}\big(X_{:t},u_{:t-1}(X_{:t-1})\big),\qquad t=0,\dots,T
\]
are submartingales. They are martingales, if $\mathcal{U}$ is decomposable
and if $u$ solves the initial problem~\eqref{eq:v0}.

Conversly if $\mathcal{U}$ decomposable and $\boldsymbol{V}_{t}^{U}$,
$\boldsymbol{v}_{t}^{U}$ are martingals, then $U$ is an optimizer
of~\eqref{eq:v0}. 
\end{thm}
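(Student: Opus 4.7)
The plan is to ride on Theorem~\ref{thm:8} (equivalently Corollary~\ref{cor:Dynamic}) and the boundary identifications $\boldsymbol{V}_0^U = V_0(X_0) = V_0$ and $\boldsymbol{v}_T^U = v(X,U)$ to cascade the Bellman-type relations~\eqref{eq:17}--\eqref{eq:18} into the two submartingale claims and, under optimality, into martingale identities. Throughout I assume that $\mathcal U$ is decomposable, so that~\eqref{eq:17} holds with equality; without this equality the submartingale direction does not close. The key device is that choosing $\tilde u_t = u_t$ (the function representing the given policy) inside the essential infimum of~\eqref{eq:18} immediately yields $\boldsymbol{V}_t^U \le \boldsymbol{v}_t^U$, after which chaining with the Bellman equality produces
\[
\boldsymbol{V}_t^U \;\le\; \boldsymbol{v}_t^U \;=\; \E\bigl(\boldsymbol{V}_{t+1}^U \,|\, X_{:t}\bigr) \;\le\; \E\bigl(\boldsymbol{v}_{t+1}^U \,|\, X_{:t}\bigr),
\]
where the last inequality uses $\boldsymbol{V}_{t+1}^U \le \boldsymbol{v}_{t+1}^U$ and monotonicity of the conditional expectation. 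Both submartingale statements follow simultaneously.

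For the martingale claim under optimality of $U$, I exploit $\E v(X,U) = V_0$ and the submartingale sandwich
\[
V_0 \;=\; \E\boldsymbol{V}_0^U \;\le\; \E\boldsymbol{V}_T^U \;\le\; \E\boldsymbol{v}_T^U \;=\; \E v(X,U) \;=\; V_0,
\]
which forces $\E\boldsymbol{V}_t^U \equiv V_0$; a submartingale whose expectation is constant in $t$ is automatically a martingale, since the a.s.\ inequality $\boldsymbol{V}_t^U \le \E(\boldsymbol{V}_{t+1}^U \,|\, X_{:t})$ must then hold with equality. For $\boldsymbol{v}^U$ the analogous step first requires identifying $\boldsymbol{v}_0^U = V_0$ under optimality: plugging the tail of $u$ corresponding to $U$ into~\eqref{eq:v3} gives $v_0(x_0,U_0) \le \E v(X,U) = V_0$, while~\eqref{eq:V3} gives $v_0(x_0,U_0) \ge V_0(x_0) = V_0$. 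The same expectation-sandwich argument then upgrades the $\boldsymbol{v}^U$-submartingale to a martingale.

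Conversely, assume both processes are martingales and $\mathcal U$ is decomposable. Combining the martingale identity $\boldsymbol{V}_t^U = \E(\boldsymbol{V}_{t+1}^U \,|\, X_{:t})$ with the Bellman equality $\boldsymbol{v}_t^U = \E(\boldsymbol{V}_{t+1}^U \,|\, X_{:t})$ yields $\boldsymbol{v}_t^U = \boldsymbol{V}_t^U$ for all $t = 0,\dots,T-1$; evaluation at $t=0$ gives $V_0 = \boldsymbol{V}_0^U = \boldsymbol{v}_0^U$, and the $\boldsymbol{v}^U$-martingale property closes the loop through $V_0 = \boldsymbol{v}_0^U = \E\boldsymbol{v}_T^U = \E v(X,U)$, so $U$ attains the infimum in~\eqref{eq:v0}. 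The main obstacle here is not any single calculation but the coordination of the two estimates $\boldsymbol{V}_t^U \le \boldsymbol{v}_t^U$ and $\boldsymbol{v}_t^U \ge \E(\boldsymbol{V}_{t+1}^U \,|\, X_{:t})$: decomposability is exactly what turns the second into an equality and lets the two chain into a submartingale inequality, so this assumption has to be invoked throughout the forward direction as well, even though the theorem flags it explicitly only for the martingale conclusion.
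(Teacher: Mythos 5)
Your proof is correct and follows essentially the same route as the paper: the submartingale inequalities come from chaining \eqref{eq:18} with \eqref{eq:17} (equivalently Corollary~\ref{cor:Dynamic}), the martingale property under optimality from the constant-expectation sandwich between $\E\boldsymbol{V}_0^U=V_0$ and $\E\boldsymbol{v}_T^U=\E v(X,U)$, and the converse from the same chain of equalities the paper writes out. You actually supply two things the paper's terse proof omits: an explicit argument for the ``optimal implies martingale'' direction, and---more importantly---the observation that decomposability is already needed for the submartingale assertion. That observation is a genuine catch: without decomposability, \eqref{eq:17} and the bound $\boldsymbol{V}_t^U\le\boldsymbol{v}_t^U$ both sit on the same side of $\boldsymbol{v}_t^U$ and cannot be chained, and one can build finite examples (e.g.\ $\mathcal{U}_1$ consisting of two constant controls) in which $\boldsymbol{V}_0^U>\E\bigl(\left.\boldsymbol{V}_1^U\right|X_0\bigr)$, so the theorem's unconditional submartingale claim, as stated, really does require the decomposability hypothesis you add.
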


\begin{proof}
The first assertion is immediate from Corollary~\ref{cor:Dynamic}.
For the second assume that $\boldsymbol{V}_{t}^{U^{\ast}}$, $\boldsymbol{v}_{t}^{U^{\ast}}$
are martingals for an underlying policy $U^{\ast}$. By employing~\eqref{eq:V0inf},~\eqref{eq:7-2}
and Theorem~\ref{thm:8} it follows that
\[
\inf_{U\in\mathcal{U}}\E v(X,U)=\boldsymbol{V}_{0}=\boldsymbol{V}_{0}^{U^{\ast}}=\E\left(\left.\boldsymbol{V}_{1}^{U^{\ast}}\right|X_{0}\right)=\boldsymbol{v}_{0}^{U^{\ast}}=\E\left(\left.\boldsymbol{v}_{T}^{U^{\ast}}\right|X_{0}\right)=\E\left(v\left(X,U^{\ast}\right)\right)
\]
 and thus the assertion.
\end{proof}
Theorem~\ref{thm:Verification} allows identifying a policy $U=u(X)$
as optimal policy by checking, if the value processes constitute a
martingale or not. Note that the verification theorem does not give
a hint on where and how to improve the policy. Instead, it can be
used ex post to check an existing, given policy with respect to optimality.

The verification theorem presented above notably works for every multistage
stochastic optimization problem. We did not impose other conditions
on the function $v$ except Hölder continuity, and we did not restrict
the analysis to Markovian processes. From this mathematical perspective
the statement is rather general.

\section{Specific objective functions\label{sec:Additive}}

Most common in optimal control, finance and reinforcement learning
are value functions, which accumulate costs occurring at consecutive
stages. We derive their intermediate value functions explicitly by
exploiting the specific structure of the objective function. To this
end we transform the equations for the general additive case first
and derive the equations for MDP (Markov decision processes) subsequently.
The Markovian property, from probabilistic perspective, is essential
for the MDP equations. As well, we derive the equations for stochastic
dual dynamic programming (SDDP) from the general equations.

\subsection{Lag-$\ell$ stochastic processes and additive objective functions}

The particular value function which we consider here, 
\begin{equation}
v(x_{:T},u_{:T})\coloneqq\sum_{t=1}^{T}\gamma^{t-1}\,c_{t}(x_{t-\ell:t},u_{t-\ell:t-1}),\label{eq:50}
\end{equation}
adds consecutive costs at lag $\ell\ge0$ (entries with negative stage
indices are ignored, as $u_{-1:2}=u_{0:2}$, e.g., and the corresponding
cost function is adjusted accordingly). The value function~\eqref{eq:50}
is of fundamental importance in finance and in reinforcement learning,
where $c_{t}$ is the cost associated with time~$t$ and $\gamma$
is a discount factor. Note the very particular choice of arguments
of the function~$c_{t}$: the last input element is the observation
$x_{t}$, but the subsequent decision~$u_{t}$ is not taken into
account. Figure~\ref{fig:MDP} depicts the support of the cost component
$c_{t}$ at stage $t$ (compare with Figure~\ref{fig:Progress}). 

\begin{figure}[H]
\[
\dots\leadsto\underbrace{X_{t-\ell}\leadsto u_{t-\ell}\leadsto\dots\leadsto X_{t-1}\leadsto u_{t-1}\leadsto X_{t}}_{c_{t}}\leadsto\dots
\]

\caption{Arguments of the cost component~$c_{t}$\label{fig:MDP}}
\end{figure}
The parameter $\gamma\in(-1,1)$ in~\eqref{eq:50} is most typically
interpreted as \emph{discount factor}. To derive the dynamic equations
we assume that the functions~$c_{t}$ are Hölder continuous and assume
that the stochastic process associated with the value function~\eqref{eq:Symbolic}
has lag $\ell$ as well; that is, $\sigma\left(X_{1},\dots,X_{t}\right)=\sigma\left(X_{t-\ell},\dots,X_{t}\right)$
for all $t=\ell,\dots,T$. Define the functions~$\tilde{V}_{t}$
by 
\begin{align*}
\tilde{V}_{t}(x_{:t},u_{:t-1})\cdot\gamma^{t} & \coloneqq V_{t}(x_{:t},u_{:t-1})-\sum_{i=1}^{t}\gamma^{i-1}\,c_{i}(x_{i-\ell:i},u_{i-\ell:i-1})
\end{align*}
so that $\tilde{V}_{0}=V_{0}$. For additive cost functions, the schematic
decomposition~\eqref{eq:Symbolic} now is 
\[
\inf_{u_{0}}c_{0}+\E_{X_{1}}\inf_{u_{1}}c_{1}+\cdots+\E_{X_{t}}\underbrace{\inf_{u_{t}}c_{t}+\underbrace{\E_{X_{t+1}}\inf_{u_{t+1}}c_{t+1}+\cdots+\E_{X_{T}}\inf_{u_{T}}c_{T}}_{\tilde{v}_{t}(x_{1:t},u_{0:t})}}_{\tilde{V}_{t}(x_{1:t},u_{0:t-1})}.
\]
From~\eqref{eq:v-3} we conclude that 
\begin{equation}
\tilde{V}_{t}(x_{:t},u_{:t-1})=\inf_{u_{t:}(\cdot)}\E\left(\left.\sum_{i=t+1}^{T}\gamma^{i-1-t}\,c_{i}\bigl(X_{i-\ell:i},u_{i-\ell:i-1},u_{i:T}(X_{:T})\bigr)\right|\begin{array}{l}
X_{:t}=x_{:t},\\
U_{:t-1}=u_{:t-1}
\end{array}\right).\label{eq:57}
\end{equation}
The function inside the expectation is independent of $x_{:t-\ell}$
and the stochastic process~$X$ has lag~$\ell$. Further, the decision
process~$U$ is adapted to~$X$ (cf.~\eqref{eq:v0}) and thus has
lag~$\ell$ as well. With that it follows that~\eqref{eq:57} actually
is 
\[
\tilde{V}_{t}(x_{t-\ell+1:t},u_{t-\ell+1:t-1})=\inf_{u_{t:}(\cdot)}\E\left(\left.\sum_{i=t+1}^{T}\gamma^{i-1-t}\,c_{i}\bigl(X_{:i},u_{i-\ell:i-1},u_{i:T}(X_{:T})\bigr)\right|\begin{array}{l}
X_{t-\ell+1:t}=x_{t-\ell+1:t},\\
U_{t-\ell+1:t-1}=u_{t-\ell+1:t-1}
\end{array}\right).
\]
Employing Remark~\ref{eq:11} we deduce the recursion 
\begin{align}
\MoveEqLeft[2]\tilde{V}_{t}(x_{t-\ell+1:t},u_{t-\ell+1:t-1})\label{eq:54}\\
 & \ge\inf_{u_{t}}\E_{X_{t+1}}\left(\left.\begin{array}{l}
c_{t+1}(x_{t+1-\ell:t},X_{t+1},u_{t+1-\ell:t})\\
\quad+\gamma\,\tilde{V}_{t+1}(x_{t-\ell+2:t},X_{t+1},u_{t-\ell+2:t})
\end{array}\right|\begin{array}{l}
X_{t-\ell+1:t}=x_{t-\ell+1:t},\\
U_{t-\ell+1:t-1}=u_{t-\ell+1:t-1}
\end{array}\right),\nonumber 
\end{align}
where equality indicates optimality. This backwards recursion leads
to the following discussion on MDP.

\subsection{MDP}

A Markov decision process (MDP) is a discrete-time stochastic control
process. To this end we consider the cost functions~\eqref{eq:50}
with lag $\ell=1$, i.e.,
\begin{equation}
v(x_{:T},u_{:T})\coloneqq\sum_{t=1}^{T}\gamma^{t-1}\,c_{t}(x_{t-1},x_{t};u_{t-1})\label{eq:51-2}
\end{equation}
and a process $X$ with same lag $\ell=1$, i.e., a Markovian process.
With that, the recursion~\eqref{eq:54} collapses further to 
\begin{equation}
\tilde{V}_{t}(x_{t})=\inf_{u_{t}}\E\left(\left.c_{t+1}(x_{t},X_{t+1},u_{t})+\gamma\,\tilde{V}_{t+1}(X_{t+1})\right|X_{t}=x_{t}\right).\label{eq:51-1}
\end{equation}
This recursion is well-known in MDP and~\eqref{eq:51-1} is also
known as backward induction involving the Bellman principle (cf.\ \citet{Bellman,Bellman1961}),
which is of fundamental importance in dynamic programming. 
\begin{rem}
The MDP literature considers rather trajectories which are driven
themselves by the control~$u$ (the control is called \emph{action}
in the MDP literature). To recognize this dependency in addition we
can restate the recursion as 
\[
\tilde{V}_{t}(x_{t})=\inf_{u_{t}}\E_{u_{t}}\left(\left.c_{t+1}(x_{t},X_{t+1},u_{t})+\gamma\,\tilde{V}_{t+1}(X_{t+1})\right|X_{t}=x_{t}\right),
\]
where $\E_{u}$ is the expectation with respect to the kernel $P_{u}(\cdot\mid x_{t})$,
which explicitly depends on the decision~$u$.
\end{rem}

\subsection{Dynamic optimization and Bellman's principle of optimality}

The cost function~\eqref{eq:51-2} is also considered on an infinite
horizon, i.e., 
\begin{equation}
v(x_{:T},u_{:T})\coloneqq\sum_{t=1}^{\infty}\gamma^{t-1}\,c_{t}(x_{t-1},x_{t};u_{t-1});\label{eq:51-2-1}
\end{equation}
problems in reinforcement learning are of this particular form~\eqref{eq:51-2-1}.
The value function~\eqref{eq:50} is bounded in the chosen setting,
if the cost functions are uniformly bounded, $|c_{t}|\le K<\infty$
and learning rate $\gamma\in(-1,1)$ (although most typical is $\gamma\in(0,1)$). 

A particularly interesting situation arises for cost functions which
do not depend on the stage~$t$, i.e., $c_{t}=c$ and decision satisfying
$(X_{t},X_{t+1})\sim(X,X^{\prime})$. Then, the value functions $\tilde{V}_{t}$
does not depend on~$t$ neither and the equation
\begin{equation}
\tilde{V}(x)=\inf_{u}\E\left(\left.c(x,X^{\prime},u)+\gamma\,\tilde{V}(X^{\prime})\right|X=x\right)\label{eq:15}
\end{equation}
holds. 

This is a fixed point equation and Banach's fixed point theorem can
be applied to prove existence and uniqueness of the value function~$\tilde{V}$
in appropriate spaces. As well, the equation~\eqref{eq:15} specifies
an iterative scheme to improve the value function~$\tilde{V}$ in
consecutive steps. As an example we state the following, where we
refer to \citet{FletenPichler} for a proof in a similar situation.
\begin{thm}
Suppose that $c$ is continuous and $X\in K$ a.s.\ for some compact
set $K\subset\mathbb{R}^{n}$ and $|\gamma|<1$. Then the value function
$\tilde{V}$ is continuous and $\tilde{V}\in C(K)$.
\end{thm}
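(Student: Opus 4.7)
The natural strategy is to recast equation~\eqref{eq:15} as a fixed point equation on the Banach space $C(K)$ of continuous functions on the compact set $K$, equipped with the supremum norm, and apply Banach's fixed point theorem. To this end, define the Bellman operator
\[
(T\tilde V)(x)\coloneqq \inf_{u}\E\left(\left.c(x,X^{\prime},u)+\gamma\,\tilde V(X^{\prime})\right|X=x\right),\qquad x\in K,
\]
and observe that any solution of~\eqref{eq:15} is precisely a fixed point of~$T$. Since $C(K)$ is complete under the sup norm, it suffices to establish (i) that $T$ maps $C(K)$ into itself and (ii) that $T$ is a strict contraction.

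For the contraction property (ii), for arbitrary $\tilde V_1,\tilde V_2\in C(K)$ and every $x\in K$ the inequality $\left|\inf_u f(u)-\inf_u g(u)\right|\le \sup_u|f(u)-g(u)|$ together with $|X'|$ taking values in $K$ gives
\[
\bigl|(T\tilde V_1)(x)-(T\tilde V_2)(x)\bigr|\le |\gamma|\,\E\bigl(\bigl|\tilde V_1(X^{\prime})-\tilde V_2(X^{\prime})\bigr|\,\bigm|\,X=x\bigr)\le |\gamma|\,\|\tilde V_1-\tilde V_2\|_{\infty}.
\]
Taking the supremum over $x\in K$ yields $\|T\tilde V_1-T\tilde V_2\|_{\infty}\le|\gamma|\,\|\tilde V_1-\tilde V_2\|_{\infty}$, and $|\gamma|<1$ by assumption. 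Banach's fixed point theorem then delivers a unique fixed point~$\tilde V\in C(K)$, which is the value function in~\eqref{eq:15}, and continuity $\tilde V\in C(K)$ is immediate.

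The main obstacle is step~(i), i.e.\ that $T$ preserves continuity. This requires two structural inputs, both of which we would make explicit: first, that the transition kernel $P(\cdot\mid x)$ of the Markov process possesses the Feller property, so that the map $x\mapsto \E\bigl(h(x,X^{\prime},u)\,\bigm|\,X=x\bigr)$ is continuous for every continuous bounded integrand~$h$; and second, that the admissible controls range over a compact set (or a continuous correspondence $x\rightrightarrows\mathcal U(x)$ with compact values). Under these hypotheses, continuity of $c$ combined with continuity of $\tilde V$ on the compact set $K$ yields joint continuity in $(x,u)$ of the integrand $c(x,X^{\prime},u)+\gamma\,\tilde V(X^{\prime})$, and the Feller property transports this continuity through the conditional expectation. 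An application of Berge's maximum theorem then ensures that $x\mapsto \inf_u \E(\cdot\mid X=x)$ is continuous on $K$, so that $T\tilde V\in C(K)$. We would refer to \citet{FletenPichler} for the details in a closely analogous setting.
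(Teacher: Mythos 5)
Your strategy is the standard and surely intended one; note, however, that the paper itself gives no proof of this theorem at all---it only defers to \citet{FletenPichler} ``for a proof in a similar situation''---so there is no in-paper argument to compare against. Your contraction estimate is correct: the elementary inequality $\left|\inf_u f(u)-\inf_u g(u)\right|\le\sup_u\left|f(u)-g(u)\right|$ together with $X'\in K$ a.s.\ gives the Lipschitz constant $|\gamma|<1$ for the Bellman operator on $\bigl(C(K),\|\cdot\|_\infty\bigr)$, and completeness of $C(K)$ then does the rest. The genuinely valuable part of your write-up is that you correctly isolate where the argument cannot be completed from the theorem's stated hypotheses alone: continuity of $c$, $X\in K$ a.s.\ and $|\gamma|<1$ do not suffice to show that $T$ maps $C(K)$ into itself. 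One needs (a) the Feller property of the transition kernel $P(\cdot\mid x)$, so that $x\mapsto\E\bigl(h(X')\mid X=x\bigr)$ preserves continuity, and (b) compactness of the control set (or a continuous compact-valued constraint correspondence), without which the pointwise infimum of continuous functions is in general only upper semicontinuous and may even fail to be finite, since $c(x,x',u)$ need not be bounded below in $u$. With those two additional hypotheses made explicit, Berge's maximum theorem closes step~(i) exactly as you describe. So your proof is correct as an argument, but it proves a slightly stronger-hypothesis version of the statement; the theorem as printed is under-hypothesized, and your proposal makes that visible rather than papering over it.
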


\subsection{SDDP}

The problem setting of stochastic dual dynamic programming (SDDP)
considers a stagewise independent stochastic process $X_{t}$ (i.e.,
$X_{t}$ is \emph{independent} of all preceding~$X_{t^{\prime}}$,
$t^{\prime}<t$), which is a further simplification of all situations
described above. With $X_{t}\sim X$, the dynamic equation reduces
further to
\begin{equation}
\tilde{V}_{t}(x_{t})=\inf_{u_{t}}\E\left(c_{t+1}(x_{t},X_{t+1},u_{t})+\gamma\,\tilde{V}_{t+1}(X_{t+1})\right).\label{eq:SDDP}
\end{equation}

This is the simplest situation from statistic perspective and it is
not surprising that large and extensive problem settings are accessible
for numerical computations. The important algorithm for SDDP for solving
the problem~\eqref{eq:SDDP} efficiently originated in \citet{PereiraSDDP}. 

We refer to \citet{Shapiro2010} for an extended analysis of the algorithm,
to \citet{Romisch} and to \citet{Leclere2014,Philpott2008450} for
convergence proofs of the algorithm. 

\section{Summary}

Multistage stochastic optimization has many applications in varying
areas, from finance to data science to just mention two. The problems
are popular and typically stated conditioned on partial realizations.
This pathwise, or $\omega$\nobreakdash-by\nobreakdash-$\omega$,
perspective lacks mathematical rigor. It is surprising that mathematical
foundations regarding measurability are incomplete from a mathematical
perspective and still missing. 

This paper clarifies that multistage optimization problems, even if
given in an informal, pathwise or $\omega$\nobreakdash-by\nobreakdash-$\omega$
way can be cast with mathematical rigor. We start by outlining the
general problem and employ the Kolmogorov continuity theorem to verify
that value functions are well defined, even if conditioned on sets
of measure zero. 

Verification theorems can be employed to confirm that candidate policies
are optimal. We further characterize optimal policies by involving
martingales to characterize these optimal solutions. 

Markov decision processes, the Bellman principle for reinforcement
learning and stochastic dual dynamic programming are probably most
well-known and common in practice of dynamic programming. We derive
these problem settings as special cases and, in this way, provide
rigorous mathematical foundations.

\bibliographystyle{abbrvnat}
\bibliography{../../Literatur/LiteraturAlois,LiteraturPaul}

\end{document}